\newtheorem{Theorem}{Theorem}[section]
\newtheorem{Lemma}[Theorem]{Lemma}
\newtheorem{Corollary}[Theorem]{Corollary}
\newtheorem{Proposition}[Theorem]{Proposition}
\newtheorem{Example}[Theorem]{Example}
\newtheorem{Remark}[Theorem]{Remark}
\def\V{\mbox{Var}}
\def\R\re
\def\V{\bf V}
\def \re{{\mathbb R}}
\def \0{\lambda_{0}}
\DeclareMathOperator{\spn}{span}
\begin{document}
\title[Second Yamabe constant]{Second Yamabe Constant on Riemannian Products}

\author[G. Henry]{Guillermo Henry}\thanks{The author was supported by the SFB 1085 'Higher Invariants' at the  Universit\"at Regensburg, funded by Deutsche Forschungsgemeinschaft  (DFG)}
 \address{Departamento de Matem\'atica, FCEyN, Universidad de Buenos
Aires, Ciudad Universitaria, Pab. I., C1428EHA,
          Buenos Aires, Argentina.}
     
\email{ghenry@dm.uba.ar}

\subjclass{53C21}

\date{}

\maketitle

\begin{abstract} Let $(M^m,g)$ be a closed Riemannian manifold  $(m\geq 2)$ of positive scalar curvature and $(N^n,h)$ any closed manifold. We study the asymptotic behaviour of the second Yamabe constant and the second $N-$Yamabe constant of $(M\times N,g+th)$ as $t$ goes to $+\infty$. We obtain that  $\lim_{t \to +\infty}Y^2(M\times N,[g+th])=2^{\frac{2}{m+n}}Y(M\times \re^n, [g+g_e]).$   If $n\geq 2$, we show the existence of nodal solutions of the Yamabe equation on $(M\times N,g+th)$ (provided  $t$  large enough). When  $s_g$ is constant, we prove that   $\lim_{t \to +\infty}Y^2_N(M\times N,g+th)=2^{\frac{2}{m+n}}Y_{\re^n}(M\times \re^n, g+g_e)$.   Also we study the second Yamabe invariant and the second  $N-$Yamabe invariant. 
\end{abstract}

\vspace{0.5 cm}

\small
\noindent \text{Keywords:} Second Yamabe constant; Yamabe equation; Nodal solutions.

\vspace{0.5 cm}

\section{Introduction}

$\ $

Let $(W^k,G)$ be  a closed Riemannian manifold of dimension  $k\geq 3$ with scalar curvature $s_G$.  The Yamabe functional  
$J:C^{\infty}(W)-\{0\}\longrightarrow \re$ is defined by

$$ J(u):= \frac{\int_W a_k|\nabla u|^2_G+s_Gu^2dv_G}{\|u\|^2_{p_{k}} }.$$
where  $a_k:=4(k-1)/(k-2)$ and  $p_k:=2k/(k-2)$.

The   infimum of the Yamabe functional over the set of smooth functions of $W$, excluding the zero function, is a conformal invariant and  it is called the Yamabe constant of $W$ in the conformal class of G (which we are going to denote by $[G]$):

$$Y(W,[G])=\inf_{u\in C^{\infty}(W)-\{0\}} J(u).$$
  
  Recall that  the conformal Laplacian operator of $(W,G)$ is

$$L_G:=a_k\Delta_G+s_G,$$  

where $\Delta_G$ is the negative Laplacian, i.e., $\Delta_{g_e}u=-\sum_{i=1}^n\frac{\partial^2 u}{\partial x_i^2}$ in the Euclidean space $(\re^n,g_e)$.

The celebrated Yamabe problem states  that in any conformal class of a closed Riemannian manifold (of dimension at least $3$) there exists a Riemmannian metric with constant scalar curvature. 
This was proved in a series of articles by Yamabe \cite{Y}, Trudinger \cite{Trudinger},  Aubin \cite{Aubin}, and Schoen \cite{Schoen1}.  Actually, they proved that 
the   Yamabe constant is attained by a smooth positive function $u_{min}$. It can be seen that a function $u_{cp}$ is a critical point of the Yamabe functional if and only if  it solves the so called Yamabe equation

\begin{equation}\label{YamabeEquation}L_G(u_{cp})=\lambda |u_{cp}|^{p_k-2}u_{cp}
\end{equation}
 
for $\lambda=J(u_{cp})/\|u_{cp}\|_{p_k}^{p_k-2}$. 
Recall that if $\tilde{G}$ belongs to $[G]$, then   $$L_G(u)=s_{\tilde{G}}u^{p_k-1}$$ where $u$ is  the positive smooth function  that satisfies $\tilde{G}=u^{p_k-2}G$.
Therefore,  $G_{u_{min}}:=u_{min}^{p_k-2}G$ must be  a metric of constant scalar curvature.

 The solution of the Yamabe problem provides a positive smooth solution of the Yamabe equation. Actually, as we pointed  out, 
there is a one to one relationship between the Riemannian metrics with constant  scalar curvature in  $[G]$ and  positive solutions of the Yamabe equation.

Nevertheless, in order to understand the set of solutions of the Yamabe equation, it  seems  important to study the nodal  solutions, i.e., a sign changing solution  of  (\ref{YamabeEquation}). In the last years  several authors have addressed  the question about the existence and multiplicity of nodal solutions of the Yamabe equation: Hebey and Vaugon \cite{Hebey-Vaugon}, Holcman \cite{Holcman}, Jourdain \cite{Jourdain}, Djadli and Jourdain \cite{Djadli-Jourdain}, Ammann and Humbert \cite{Ammann-Humbert}, Petean \cite{Petean2},  El Sayed \cite{ElSayed} among others.

Let 

$$\lambda_1(L_G)<\lambda_2(L_G)\leq \lambda_3(L_G)\leq\dots \nearrow +\infty$$
be the sequence of eigenvalues of $L_G$, where each eigenvalue appears repeated accor\-ding to its multiplicity. It is well known that it is an increasing sequence that tends to infinity.

When $Y(W, [G])\geq 0$, it is not difficult to see  that 

$$Y(W,[G])=\inf_{\tilde{G} \in [G]} \lambda_1(L_{\tilde{G}})vol(W,\tilde{G})^{\frac{2}{k}},$$
 where $vol(W,\tilde{G})$ is the volume of $(W,\tilde{G})$. 
   
In \cite{Ammann-Humbert}, Ammann and Humbert introduced the $l$th Yamabe constant. This cons\-tant is  defined by 

$$Y^l(W,[G]):=\inf_{\tilde{G}\in [G]}\lambda_l(L_{\tilde{G}})vol(W,\tilde{G})^{\frac{2}{k}}.$$ 

Like the Yamabe constant, the $l$th Yamabe constant is a conformal invariant. 

They showed that
  the second Yamabe constant of a connected Riemannian ma\-ni\-fold with non-negative Yamabe constant  is never achieved by a Riemannian metric. Nevertheless, if we enlarge the conformal class, allowing generalized metrics (i.e., metrics of the form $u^{p_k-2}G$ with  $u\in L^{p_k}(W)$,  $u\geq 0$, and $u$ does not vanish identically), under some assumptions on $(W,G)$,  the second Yamabe constant  is achieved (\cite{Ammann-Humbert}, Corollary 1.7).
    Moreover,  if $Y^2(W,G)>0$, they proved  that if a    generalized metric $\tilde{G}$ realizes the second Yamabe constant, then it is    
  of the form $|w|^{p_k-2}G$ with $w\in C^{3,\alpha}(W)$ a nodal solution of the Yamabe equation. If $Y^2(W,G)=0$ and  is attained, then any eigenfunction corresponding to the second eigenvalue of $L_G$ is a nodal solution.

Therefore, if we know that  the second Yamabe constant is achieved, we have a nodal solution of the Yamabe equation. However, this is not the general situation. There  exist some Riemannian manifolds for which    the second Yamabe constant is not achieved,  even by a generalized metric. For instance, $(S^k, g_0^k)$ where $g_0^k$ is the  round metric of curvature $1$ (cf. \cite{Ammann-Humbert}, Proposition 5.3).

Let $(M,g)$ and $(N,h)$ be closed Riemannian manifolds and consider the Riema\-nnian product $(M\times N,g+h)$. We define the $N$-Yamabe constant as
the infimum of the Yamabe functional over the set of   smooth functions, excluding the zero function, that depend only on $N$:

$$Y_N(M\times N,g+h):=\inf_{u\in C^{\infty}(N)-\{0\}} J(u).$$

Clearly, $Y(M\times N,g+h)\leq Y_N(M\times N,g+h)$. The $N-$Yamabe constant is not a conformal invariant, but it is scale invariant. It  was  first introduced  by Akutagawa, Florit, and Petean  in \cite{A-F-P}, where they studied, among other things, its behavior on Riemannian products of the form $(M\times N, g+th)$ with $t>0$.

Actually, the infimum of $J$ over $C^{\infty}(N)-\{0\}$ is a minimum, and it is  achieved by a positive smooth function. 
 
 When the scalar curvature of the product is constant,   the critical points of the Yamabe functional restricted to  $C^{\infty}(N)-\{0\}$, satisfy the Yamabe equation, and thereby, also satisfy the subcritical Yamabe equation   (recall that $p_{m+n}<p_n$). Hence, if $Y_N(M\times N,g+h)=J(u)$, then  the  metric $G=u^{p_{m+n}-2}(g+h)\in[g+h]$ has    constant scalar curvature.
  When  $s_{g+h}\leq 0$, the Yamabe constant of  $(M\times N,g+h)$ is nonpositive, and in this situation,   there is essentially only one metric of constant scalar curvature, the metric $g+h$. Therefore,  this case    it is not interesting.

It seems important to consider the $N-$Yamabe constant because in some cases the minimizer (or some minimizers) of the Yamabe functional depends only on one of the variables of the product. For instance, it was proved  by Kobayashi in \cite{Kobayashi} and Schoen in \cite{Schoen} that the minimizer of the Yamabe functional on $(S^n\times S^1, g_0^n+tg_0^1)$ depends only on $S^1$.  Also, this might be the case for $(S^n\times \mathbb{H}^m, g^n_0+tg_h)$ (for small values of $t$), where $(\mathbb{H}^m, g_h)$ is the $m-$dimensional Hyperbolic space of curvature $-1$. These Riemannian products are interesting, because their Yamabe constants  appear in the surgery formula for the Yamabe invariant (see the definition below) proved by Ammann, Dahl, and Humbert in \cite{A-D-H}.

We define the $l$th $N-$Yamabe constant  as:
 
$$Y^l_{N}(M\times N, g+h):=\inf_{G\in [g+h]_N}\lambda_l^N(L_{G})vol(M\times N ,G)^{\frac{2}{m+n}},$$
where   $[g+h]_N$ is the set of Riemmanian metrics in the conformal class $[g+h]$ that can be written as $u^{p_{m+n}-2}(g+h)$, with $u$ a positive  smooth function that depends only on $N$, and  $\lambda_l^N(L_G)$ is the $l$th eigenvalue of  $L_G$ restricted to functions that depend only on  the variable $N$.  

 A generalized metric  $G=u^{p_{m+n}-2}(g+h)$  is  called a generalized $N-$metric if $u$ depends only  on $N$.

Petean proved (\cite{Petean2}, Theorem 1.1) that the second $N-$Yamabe  constant  of a Riemannian product of closed manifolds with constant and positive scalar curvature is always  attained  by a generalized  $N-$metric of the form $|w|^{p_{m+n}-2}(g+h)$ where $w\in C^{3,\alpha}(N)$  is  a nodal solution of the Yamabe equation. 

The aim of the present article  is study the behaviour of the second Yamabe constant and the second $N-$Yamabe constant of a Riemannian product $(M\times N, g+th)$ with $t>0$. 
 We prove the following results: 

\begin{Theorem} \label{limitY2} Let $(M^m,g)$ be a closed manifold $(m\geq 2)$ with positive scalar curvature and let $(N^n,h)$ be a  closed manifold. Then, 

$$\lim_{t \to +\infty}Y^2(M\times N,[g+th])=2^{\frac{2}{m+n}}Y(M\times \re^n, [g+g_e]).$$

\end{Theorem}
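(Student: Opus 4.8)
The plan is to reduce everything to the description of the second Yamabe constant as an infimum over pairs of functions with disjoint supports, due to Ammann and Humbert \cite{Ammann-Humbert}. Writing $k=m+n$ and letting $J$ denote the Yamabe functional of $(M\times N,g+th)$, this description reads
$$Y^2(M\times N,[g+th])=\inf\left\{\left(J(u)^{k/2}+J(v)^{k/2}\right)^{2/k}\ :\ u,v\in C^{\infty}(M\times N)\setminus\{0\},\ uv\equiv 0\right\},$$
and it is valid because for $t$ large the scalar curvature $s_{g}+t^{-1}s_{h}$ is positive, so $Y(M\times N,[g+th])>0$. I would combine this formula with one input about the first Yamabe constant, namely
$$\lim_{t\to+\infty}Y(M\times N,[g+th])=Y(M\times\re^{n},[g+g_{e}]),$$
and with an explicit two-bubble test configuration adapted to the blown-up $N$ factor.

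For the lower bound I would argue that, for any admissible pair, $J(u),J(v)\ge Y(M\times N,[g+th])$, since $Y(M\times N,[g+th])$ is by definition the infimum of $J$ over all nonzero functions. As $x\mapsto x^{k/2}$ is increasing on $[0,+\infty)$ and the first Yamabe constant is positive, the characterization forces
$$Y^2(M\times N,[g+th])\ge\left(2\,Y(M\times N,[g+th])^{k/2}\right)^{2/k}=2^{2/(m+n)}\,Y(M\times N,[g+th]).$$
Taking the limit and using the asymptotics of the first Yamabe constant gives $\liminf_{t\to+\infty}Y^2(M\times N,[g+th])\ge 2^{2/(m+n)}Y(M\times\re^{n},[g+g_{e}])$.

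For the upper bound I would produce, for each $\e>0$ and each large $t$, a disjoint pair $(u_t,v_t)$ whose associated quantity $(J(u_t)^{k/2}+J(v_t)^{k/2})^{2/k}$ is close to $2^{2/(m+n)}Y(M\times\re^{n},[g+g_{e}])$. Fix $\psi\in C^{\infty}_{c}(M\times\re^{n})$ supported in $M\times B_{R}(0)$ with Yamabe quotient on $(M\times\re^{n},g+g_{e})$ at most $Y(M\times\re^{n},[g+g_{e}])+\e$. Choosing two distinct points $y_{1},y_{2}\in N$ and transplanting $\psi$ into $(N,th)$ through $h$-normal coordinates at each $y_{i}$ rescaled by $t^{-1/2}$, one obtains two bumps supported in $h$-balls of radius $\sim R\,t^{-1/2}$ around $y_{1}$ and $y_{2}$. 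For $t$ large these balls are disjoint, so $u_tv_t\equiv 0$, and the metric seen by each bump converges to $g+g_{e}$; hence $\limsup_{t\to+\infty}J(u_t)\le Y(M\times\re^{n},[g+g_{e}])+\e$, and likewise for $v_t$. The characterization then gives
$$Y^2(M\times N,[g+th])\le\left(J(u_t)^{k/2}+J(v_t)^{k/2}\right)^{2/k},$$
so that $\limsup_{t\to+\infty}Y^2(M\times N,[g+th])\le 2^{2/(m+n)}\big(Y(M\times\re^{n},[g+g_{e}])+\e\big)$; letting $\e\to 0$ closes the argument.

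The main obstacle is the lower-bound half of the first-Yamabe asymptotics, that is $\liminf_{t\to+\infty}Y(M\times N,[g+th])\ge Y(M\times\re^{n},[g+g_{e}])$. This is where the hypothesis $s_{g}>0$ is essential: it requires a concentration-compactness analysis of near-minimizers on $(M\times N,g+th)$ showing that positivity of $s_{g}$ prevents the energy from leaking along the expanding $N$ directions and forces the Euclidean model $M\times\re^{n}$ to emerge in the limit. By contrast, the convergence of the transplanted quotients used in the upper bound is routine once the rescaling of $(N,th)$ to $(\re^{n},g_{e})$ is written out carefully.
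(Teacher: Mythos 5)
Your proposal is correct and follows essentially the same route as the paper: the lower bound is Ammann--Humbert's inequality $2^{2/(m+n)}Y(M\times N,[g+th])\le Y^2(M\times N,[g+th])$ combined with the Akutagawa--Florit--Petean limit $\lim_{t\to+\infty}Y(M\times N,[g+th])=Y(M\times\re^n,[g+g_e])$ (both quoted as known inputs in the paper too), and the upper bound is the same two-bubble construction, transplanting a compactly supported near-minimizer on $(M\times\re^n,g+g_e)$ into two disjoint normal balls of $(N,th)$ of radius $\sim R\,t^{-1/2}$. The only difference is packaging: you invoke an exact characterization of $Y^2$ as an infimum over disjoint-support pairs, which is true but not literally stated in Ammann--Humbert; what you actually use are its two one-sided halves, namely $Y^2\ge 2^{2/(m+n)}Y$ (their Proposition 5.6) and $Y^2\le\bigl(J(u)^{k/2}+J(v)^{k/2}\bigr)^{2/k}$ for disjoint pairs (which the paper reproves directly from the variational characterization with $\phi=\phi_1+\phi_2$ and $V_0=\spn(\phi_1,\phi_2)$), so nothing essential is at stake.
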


From this theorem, as well as from some results in  \cite{A-F-P} and \cite{Ammann-Humbert}, we obtain:

\begin{Corollary}\label{nodalsolution} Let $(M^m,g)$ as above  and let $(N^n,h)$ be a closed  Riemannian manifold $(n\geq 2)$. For $t$ large enough,  $Y^2(M\times N,[g+th])$ is attained by a generalized metric of the form $|v|^{p_{m+n}-2}(g+th)$, where $v$ is a nodal solution  of  the Yamabe equation on $(M\times N, g+th)$. Moreover, 
$v\in C^{3,\alpha}(M\times N)$ and is smooth in $M\times N-\{  v^{-1}(0)\}$. 
\end{Corollary}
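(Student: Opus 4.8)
The plan is to combine Theorem~\ref{limitY2} with the sufficient condition for attainment of the second Yamabe constant established by Ammann and Humbert and with the asymptotic behaviour of the \emph{ordinary} Yamabe constant of $(M\times N,g+th)$ proved in \cite{A-F-P}. Recall from \cite{Ammann-Humbert} (their Corollary 1.7) the following criterion: writing $k=m+n$, if
$$Y^2(M\times N,[g+th]) < \left( Y(M\times N,[g+th])^{k/2} + Y(S^k,[g_0^k])^{k/2}\right)^{2/k},$$
then $Y^2(M\times N,[g+th])$ is attained by a generalized metric; and, as recalled in the Introduction, if in addition $Y^2(M\times N,[g+th])>0$, this realizing generalized metric has the form $|v|^{p_{m+n}-2}(g+th)$ with $v\in C^{3,\alpha}(M\times N)$ a nodal solution of the Yamabe equation, smooth away from $v^{-1}(0)$ by interior elliptic regularity. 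Thus it suffices to verify, for all sufficiently large $t$, the displayed strict inequality together with the positivity of $Y^2$.

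To do this I would pass to the limit $t\to+\infty$. Set $A:=Y(M\times\re^n,[g+g_e])$. By Theorem~\ref{limitY2} the left-hand side tends to $2^{\frac{2}{m+n}}A$. For the right-hand side I would invoke the result of \cite{A-F-P} that $\lim_{t\to+\infty}Y(M\times N,[g+th])=A$, so that the bracketed threshold converges to $\left(A^{(m+n)/2}+Y(S^{m+n},[g_0^{m+n}])^{(m+n)/2}\right)^{2/(m+n)}$. Consequently the desired inequality will hold for large $t$ as soon as it holds strictly between the two limits, that is, as soon as
$$2^{\frac{2}{m+n}}A < \left(A^{\frac{m+n}{2}}+Y(S^{m+n},[g_0^{m+n}])^{\frac{m+n}{2}}\right)^{\frac{2}{m+n}}.$$
Raising both sides to the power $(m+n)/2$ and cancelling one copy of $A^{(m+n)/2}$, this reduces to the single clean inequality $A<Y(S^{m+n},[g_0^{m+n}])$.

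The key remaining point, and the step I expect to be the main obstacle, is therefore the strict inequality $Y(M\times\re^n,[g+g_e])<Y(S^{m+n},[g_0^{m+n}])$. Since $M$ has positive scalar curvature we have $s_{g+g_e}=s_g>0$ on $M\times\re^n$, so $A>0$; this simultaneously guarantees that $2^{\frac{2}{m+n}}A>0$, hence that $Y^2(M\times N,[g+th])>0$ for $t$ large. For the strictness I would use that, by the analysis in \cite{A-F-P} (which is where the hypothesis $n\geq 2$ enters), the Yamabe functional on $M\times\re^n$ admits a genuine minimizer; an equality $A=Y(S^{m+n},[g_0^{m+n}])$ would then produce a constant scalar curvature metric conformal to $g+g_e$ realizing the sphere value, forcing $(M\times\re^n,[g+g_e])$ to be conformal to the round sphere. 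This is impossible, since $M\times\re^n$ is not diffeomorphic to $S^{m+n}$ nor to $S^{m+n}$ minus a point (a closed $M$ with $m\geq 2$ is not diffeomorphic to $\re^m$), whence the inequality is strict.

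Finally, since the gap between the two limits is strict and $Y^2$ is bounded below by a positive quantity in the limit, continuity in $t$ yields a threshold $t_0$ such that for every $t\geq t_0$ both the Ammann--Humbert inequality and $Y^2(M\times N,[g+th])>0$ hold. Applying the criterion recalled in the first paragraph then produces, for each such $t$, the generalized metric $|v|^{p_{m+n}-2}(g+th)$ realizing $Y^2(M\times N,[g+th])$ with $v\in C^{3,\alpha}(M\times N)$ a nodal solution of the Yamabe equation, smooth on $M\times N\setminus v^{-1}(0)$, which is precisely the assertion of the Corollary.
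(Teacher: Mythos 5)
Your overall strategy coincides with the paper's: pass to the limit in the Ammann--Humbert criterion using Theorem \ref{limitY2} and Theorem \ref{thmAFP}, check positivity of $Y^2$ for large $t$, and reduce everything to the single strict inequality $Y(M\times\re^n,[g+g_e])<Y(S^{m+n})$. That reduction is exactly the paper's Corollary \ref{boundforY2attained}, and this part of your argument (including the observation that $Y^2(M\times N,[g+th])>0$ for large $t$ because its limit $2^{\frac{2}{m+n}}Y(M\times\re^n,[g+g_e])$ is positive) is correct; note only that no ``continuity in $t$'' is needed, the definition of the two limits already yields the threshold $t_0$.

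The genuine gap is your justification of the key inequality $Y(M\times\re^n,[g+g_e])<Y(S^{m+n})$. The paper does not prove this: it quotes it as inequality (\ref{upperboundYMR}), which is Theorem 1.3 of \cite{A-F-P}, valid for $m,n\geq 2$ --- and this citation is precisely where the hypothesis $n\geq 2$ enters. Your replacement argument rests on two unsupported claims. First, \cite{A-F-P} do \emph{not} prove that the Yamabe functional on the noncompact manifold $M\times\re^n$ admits a minimizer; what they prove is attainment of the restricted constant $Y_{\re^n}(M\times\re^n,g+g_e)$ (minimization among functions of the $\re^n$ variable, via Gagliardo--Nirenberg), which is a different and weaker statement, and attainment of the full Yamabe constant of $M\times\re^n$ is not established there. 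Second, the rigidity step ``equality with $Y(S^{m+n})$ plus attainment forces conformal equivalence to the round sphere'' is the Aubin--Schoen theorem for \emph{closed} manifolds; in the complete noncompact setting it is false in the form you use it. A concrete counterexample is the cylinder $(S^{m-1}\times\re,\ g_0^{m-1}+g_e)$: its Yamabe constant equals $Y(S^m)$ (it is conformal to $\re^m\setminus\{0\}$, whose complement in $S^m$ has zero capacity, so the constant is even attained in the $H^2_1$ sense by the pulled-back Aubin bubble), yet $S^{m-1}\times\re$ is diffeomorphic neither to $S^m$ nor to $S^m$ minus a point --- it is $S^m$ minus \emph{two} points. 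So equality with attainment can at best force conformal equivalence to some domain in the sphere, and your purely topological argument does not exclude that for $M\times\re^n$; excluding it requires a local obstruction (the Weyl tensor of $g+g_e$ is nonzero when $n\geq 2$ and $s_g>0$), which is the kind of input that genuinely distinguishes $n\geq2$ from $n=1$. The correct and simple repair is to do what the paper does: invoke (\ref{upperboundYMR}) from \cite{A-F-P} and then conclude via Theorem \ref{mainresultAH} exactly as you do in your final paragraph.
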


We point out that the nodal solutions provided by  Corollary \ref{nodalsolution}, in general, are not the same solutions provided by (\cite{Petean2}, Theorem 1.1), which depend only on  $N$ (see Subsection \ref{2yamabeconstant} and Remark \ref{YR>Y}).

For the second $N-$Yamabe  constant we obtain the next theorem:

\begin{Theorem}\label{limitY2N} Let $(M^m,g)$ be a closed manifold $(m\geq 2)$ of positive and constant scalar curvature and $(N^n,h)$ be any closed manifold. Then, 

$$\lim_{t \to +\infty}Y^2_N(M\times N,g+th)=2^{\frac{2}{m+n}}Y_{\re^n}(M\times \re^n, g+g_e).$$

\end{Theorem}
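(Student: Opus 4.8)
The plan is to reduce the problem to the asymptotics of a \emph{Dirichlet} $N$-Yamabe constant and to the first $N$-Yamabe limit of \cite{A-F-P}. For $\Omega\subset N$ open, write $Y^D_N(M\times\Omega, g+th):=\inf J(v)$ over nonzero $v\in C_c^\infty(\Omega)$ (functions on $N$ supported in $\Omega$, with $J$ the Yamabe functional of $(M\times N, g+th)$). The structural input, established as the $N$-restricted analogue of the Ammann--Humbert analysis in \cite{Ammann-Humbert}, is the nodal reformulation
\begin{equation*}
Y^2_N(M\times N, g+th)=\inf_{\Omega_1,\Omega_2}\Big(Y^D_N(M\times\Omega_1, g+th)^{\frac{m+n}{2}}+Y^D_N(M\times\Omega_2, g+th)^{\frac{m+n}{2}}\Big)^{\frac{2}{m+n}},
\end{equation*}
the infimum being over pairs of disjoint open sets $\Omega_1,\Omega_2\subset N$. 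This follows from the min--max description $Y^2_N=\inf_{\dim V=2}\sup_{v\in V}J(v)$ and the sign change of a second eigenfunction $w=w^+-w^-$ (with nodal domains $\Omega_1=\{w>0\}$, $\Omega_2=\{w<0\}$), together with the elementary fact that for functions of disjoint support $\sup_{v\in\spn\{w^+,w^-\}}J(v)=(J(w^+)^{\frac{m+n}{2}}+J(w^-)^{\frac{m+n}{2}})^{\frac{2}{m+n}}$; the exponent $\frac{m+n}{2}$ and, for two equal contributions, the factor $2^{2/(m+n)}$ arise from maximizing $(\alpha^2+\beta^2)\big/(|\alpha|^{p_{m+n}}+|\beta|^{p_{m+n}})^{2/p_{m+n}}$ and using $(p_{m+n}-2)/p_{m+n}=2/(m+n)$.

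With this reformulation the whole theorem reduces to understanding $Y^D_N(M\times\Omega, g+th)$ as $t\to+\infty$. The upper bound for $Y^2_N$ is then immediate: taking $\Omega_1,\Omega_2$ two small disjoint balls around fixed distinct points of $N$ and transplanting a compactly supported near-optimizer of $Y_{\re^n}(M\times\re^n, g+g_e)$ (which, in the rescaled metric $th$, occupies an $(N,h)$-ball of radius $O(t^{-1/2})$) shows $Y^D_N(M\times\Omega_i, g+th)\to Y_{\re^n}(M\times\re^n, g+g_e)$, exactly as in the first $N$-Yamabe limit $\lim_t Y_N(M\times N, g+th)=Y_{\re^n}(M\times\re^n, g+g_e)$ of \cite{A-F-P}. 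Hence the right-hand side of the reformulation tends to $2^{2/(m+n)}Y_{\re^n}(M\times\re^n, g+g_e)$, giving $\limsup_t Y^2_N\le 2^{2/(m+n)}Y_{\re^n}(M\times\re^n, g+g_e)$. Constancy of $s_g$ enters here (and in \cite{A-F-P}) by making the blow-up model $(M\times\re^n, g+g_e)$ a manifold of constant positive scalar curvature $s_g$.

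For the lower bound I would again invoke the reformulation, the crux being a \emph{uniform} estimate
\begin{equation*}
Y^D_N(M\times\Omega, g+th)\ \ge\ Y_{\re^n}(M\times\re^n, g+g_e)-o(1)\qquad(t\to+\infty)
\end{equation*}
valid for \emph{every} open $\Omega\subset N$: granting it, every disjoint pair satisfies $\big(Y^D_N(\Omega_1)^{\frac{m+n}{2}}+Y^D_N(\Omega_2)^{\frac{m+n}{2}}\big)^{\frac{2}{m+n}}\ge 2^{\frac{2}{m+n}}\min_i Y^D_N(\Omega_i)$, so $\liminf_t Y^2_N\ge 2^{2/(m+n)}Y_{\re^n}(M\times\re^n, g+g_e)$. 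I expect this uniform estimate to be the main obstacle. Since it must hold for all $\Omega$ --- large, small, or disconnected --- it cannot follow from a single transplantation but needs a concentration/blow-up argument: if some $v_t\in C_c^\infty(\Omega_t)$ had $J(v_t)\le Y_{\re^n}(M\times\re^n, g+g_e)-\de$, one rescales around the concentration point in $N$ by the natural factor $\sqrt t$, uses that (with $s_g$ positive and constant) the rescaled metrics converge to $(M\times\re^n, g+g_e)$, and controls the possible loss of $L^{p_{m+n}}$-mass so as to extract a limiting function on $M\times\re^n$ violating the definition of $Y_{\re^n}(M\times\re^n, g+g_e)$. Making the error $o(1)$ uniform in $\Omega$, and handling the Dirichlet boundary (where the blow-up might a priori land on a half-space), are the delicate points, both treated by adapting the concentration estimates of \cite{A-F-P} to the Dirichlet problem. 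Combining the two bounds yields the stated limit.
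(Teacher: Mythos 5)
Your architecture --- disjoint-domain reformulation of $Y^2_N$, transplantation for the upper bound, a uniform lower bound for the Dirichlet constants --- is sound, and two of its three pieces match the paper. The upper bound is exactly the paper's argument (the proof of Theorem \ref{limitY2}, restricted to functions of $N$): two disjoint copies of a compactly supported near-minimizer of $Y_{\re^n}(M\times \re^n,g+g_e)$ placed in small balls of $(N,th)$, with the factor $2^{\frac{2}{m+n}}$ coming from the equal-energy optimization you describe. Your reformulation is also correct, and its nontrivial direction (``$\geq$'') is proved by splitting the second eigenfunction of $L_{u^{p_{m+n}-2}(g+th)}$ into positive and negative parts and applying H\"older; that computation is precisely the content of the paper's Lemma \ref{lowerbound2Nyamabe}, following Proposition 5.6 of \cite{Ammann-Humbert} and \cite{Petean2}. (A minor caveat: to pass from the eigenfunction pieces $w^{\pm}$ to honest Dirichlet constants of the open nodal domains one should define $Y^D_N$ via $H^1_0$ or argue by approximation; this is standard but you assert it rather than prove it.)

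The genuine problem is the lower bound, and it is a problem of your own making. The ``uniform estimate'' $Y^D_N(M\times\Omega,g+th)\geq Y_{\re^n}(M\times\re^n,g+g_e)-o(1)$, which you single out as the main obstacle and propose to attack by a concentration/blow-up analysis adapted to the Dirichlet problem (with half-space limits, loss-of-mass control, etc.), is not an obstacle at all --- and your proof is incomplete exactly there, since you explicitly defer those ``delicate points.'' Any $v\in C^{\infty}_c(\Omega)$, extended by zero, is an admissible test function for $Y_N(M\times N,g+th)$, which is the infimum of the \emph{same} functional $J$ over all of $C^{\infty}(N)-\{0\}$ (equivalently $H^2_1(N)-\{0\}$); hence $Y^D_N(M\times\Omega,g+th)\geq Y_N(M\times N,g+th)$ for \emph{every} open $\Omega$, the right-hand side is independent of $\Omega$ so uniformity is automatic, and Theorem \ref{thmAFP} of \cite{A-F-P} --- which you already invoke for the upper bound --- gives $Y_N(M\times N,g+th)\to Y_{\re^n}(M\times\re^n,g+g_e)$. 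No blow-up, no Dirichlet boundary analysis. This one-line replacement is in substance how the paper argues: Lemma \ref{lowerbound2Nyamabe} bounds each nodal piece by $Y_N$ directly (giving $2^{\frac{2}{m+n}}Y_N\leq Y^2_N$), and Theorem \ref{thmAFP} finishes. One further small correction: the constancy of $s_g$ is needed not to make the blow-up model nice, but so that $s_{g+th}=s_g+t^{-1}s_h$ is constant on $M$ and hence $L_{g+th}$ preserves the space of functions depending only on $N$, without which $\lambda^N_l$ and the variational characterization of $Y^2_N$ do not make sense.
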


In Subsection \ref{lYnoncompact} we will define the second Yamabe constant and the $N-$second Yamabe constant for a non-compact manifold. There we prove:

\begin{Theorem}\label{non compact} Let $(M^m,g)$ be a closed manifold of positive scalar curvature. Then, 

$$Y^2(M\times\re^n,g+g_e)=2^{\frac{2}{m+n}}Y(M\times \re^n, [g+g_e]).$$  

If in addition $(M^m,g)$ has constant  scalar curvature, then 

$$Y^2_{\re^n}(M\times\re^n,g+g_e)=2^{\frac{2}{m+n}}Y_{\re^n}(M\times \re^n, g+g_e).$$ 

\end{Theorem}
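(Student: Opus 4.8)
The plan is to prove the identity through two matching inequalities, adapting to the non-compact factor $\re^n$ the combinatorial description of the second Yamabe constant due to Ammann--Humbert \cite{Ammann-Humbert}. Write $G_0=g+g_e$, $k=m+n$, and $p=p_k$. Since $g_e$ is flat, the scalar curvature of $G_0$ equals $s_g$, so $Y:=Y(M\times\re^n,[G_0])>0$ by the positivity of $s_g$; moreover $G_0$ is invariant under translations of the $\re^n$-factor. I will work with the generalized metrics $G=w^{p-2}G_0$ admissible in the non-compact definition of $Y^2$ from Subsection \ref{lYnoncompact}, using the min-max characterization $\lambda_2(L_G)=\inf_{\dim V=2}\sup_{u\in V\setminus\{0\}}R_G(u)$ of the second eigenvalue, where $R_G(u)=\frac{\int(a_k|\nabla u|_{G_0}^2+s_gu^2)\,dv_{G_0}}{\int w^{p-2}u^2\,dv_{G_0}}$ and $vol(G)=\int w^p\,dv_{G_0}$.

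For the lower bound I will show $\lambda_2(L_G)\,vol(G)^{2/k}\ge 2^{2/k}Y$ for every admissible $G$. Take a (generalized) second eigenfunction $\phi$; since the first eigenfunction is positive and $\phi$ is orthogonal to it in $L^2(w^{p-2}dv_{G_0})$, the function $\phi$ changes sign, and its positive and negative parts $\phi^{\pm}$ have disjoint supports $\Omega_{\pm}$. Testing the eigenvalue equation against $\phi^{\pm}$ gives $E(\phi^{\pm})=\lambda_2\int w^{p-2}(\phi^{\pm})^2$, where $E(u)=\int a_k|\nabla u|^2+s_gu^2$. Applying Hölder on $\Omega_{\pm}$ yields $\int w^{p-2}(\phi^{\pm})^2\le vol(\Omega_{\pm})^{2/k}\|\phi^{\pm}\|_p^2$, and since $Y\le E(\phi^{\pm})/\|\phi^{\pm}\|_p^2$ I obtain $vol(\Omega_{\pm})\ge (Y/\lambda_2)^{k/2}$. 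Because $\Omega_+$ and $\Omega_-$ are disjoint, $vol(G)\ge vol(\Omega_+)+vol(\Omega_-)\ge 2(Y/\lambda_2)^{k/2}$, which rearranges to the claimed inequality; taking the infimum over $G$ gives $Y^2\ge 2^{2/k}Y$.

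For the upper bound I will exhibit test configurations. Fix $\e>0$ and choose a compactly supported $\psi$ with $J(\psi)\le Y+\e$ (cutting off a near-minimizer and controlling the tails). Using translation invariance of $G_0$ in the $\re^n$-direction, set $u_1=\psi$ and $u_2=\psi(\cdot-\tau)$ for a large translation $\tau$, so that $u_1,u_2$ have disjoint supports and $J(u_2)=J(u_1)$. I then choose $w$ to be a suitable multiple of $u_i$ on each support, balancing the two Rayleigh quotients and normalizing $vol(G)=1$; a direct computation shows $\sup_{u\in\spn\{u_1,u_2\}}R_G(u)=(J(u_1)^{k/2}+J(u_2)^{k/2})^{2/k}=2^{2/k}J(\psi)$. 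Hence $\lambda_2(L_G)\,vol(G)^{2/k}\le 2^{2/k}(Y+\e)$, and letting $\e\to0$ gives $Y^2\le 2^{2/k}Y$, completing the first identity. The $N$-version is proved by the identical scheme restricted to functions depending only on $\re^n$: here the hypothesis that $s_g$ is constant is exactly what guarantees that $L_{G_0}$ preserves the space of $N$-functions, so the restricted min-max problem is well posed and self-adjoint; the positive first $N$-eigenfunction forces the second to change sign, the volume-splitting estimate applies verbatim, and translation invariance again supplies the two disjoint bumps, yielding $Y^2_{\re^n}=2^{2/k}Y_{\re^n}$.

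The main obstacle I anticipate is analytic rather than algebraic: on the non-compact manifold $M\times\re^n$ the operator $L_G$ need not have discrete spectrum, so I must ensure that the min-max value $\lambda_2$ is realized by (or well approximated by) an honest sign-changing eigenfunction for the lower bound, and that near-minimizers of $Y(M\times\re^n,[G_0])$ can be truncated to compact support with negligible loss for the upper bound. I expect to handle both points by exhausting $M\times\re^n$ with compactly supported generalized metrics and passing to the limit, together with a concentration estimate ruling out escape of energy to infinity when the Yamabe quotient is close to $Y$.
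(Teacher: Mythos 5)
Your upper bound coincides with the paper's argument: two translated copies $v_i(p,q)=f(p,q-q_i)$ of a compactly supported near-minimizer, placed with disjoint supports using the translation invariance of $g+g_e$ in the $\re^n$ direction, spanning a $2$-plane on which the Rayleigh quotient times the volume factor equals $2^{\frac{2}{m+n}}J(f)$. One remark: since the Yamabe constant of a non-compact manifold is \emph{defined} in Subsection \ref{YCnoncompact} as an infimum over $C^{\infty}_c$, the near-minimizer is compactly supported from the outset; no cutting-off, tail control, or ``concentration estimate ruling out escape of energy'' is needed anywhere in the proof, so the analytic worries in your last paragraph concerning the upper bound are vacuous.

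The genuine gap is in your lower bound, and you half-acknowledge it yourself: you begin with ``take a (generalized) second eigenfunction $\phi$'' of $L_G$, but on $M\times\re^n$ no such eigenfunction need exist. For an admissible $G=w^{p_{m+n}-2}(g+g_e)$ the conformal factor $w$ has compact support, so the weighted quotient $E(v)/\int w^{p_{m+n}-2}v^2$ is degenerate outside $\mathrm{supp}(w)$ and its inf-sup values are in general not attained; the same applies to the \emph{first} eigenfunction, whose positivity you need in order to force $\phi$ to change sign. Your proposed remedy (exhaustion by compactly supported generalized metrics plus concentration estimates) does not address this, because the admissible data are already compactly supported by definition; what is missing is a mechanism producing honest eigenfunctions. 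The paper's mechanism is the following: for $u$ supported in $M\times B_R(0)$, perturb to $u_k=\big(u+\tfrac1k\chi_{(M\times \overline{B_R(0)})}\big)/\|u+\tfrac1k\chi_{(M\times \overline{B_R(0)})}\|_{p_{m+n}}$, so the weight is bounded away from zero on the compact region, and consider the \emph{Dirichlet} problem for the conformally transformed operator $P_k$ on $M\times\overline{B_R(0)}$. This problem has discrete spectrum, a positive first eigenfunction and a sign-changing second eigenfunction, to which your H\"older/volume-splitting computation (equivalent to the paper's double H\"older) applies verbatim and yields $2^{\frac{2}{m+n}}Y(M\times B_R(0),[g+g_e])\leq\lambda_2^k$. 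Since functions compactly supported in the ball are admissible for the whole space, $Y(M\times B_R(0),[g+g_e])\geq Y(M\times\re^n,[g+g_e])$; since $\lambda_2^k=\inf_V F(u_k,V)$ and $F$ is continuous in the conformal factor, letting $k\to\infty$ gives the claim for $u$, and choosing $R$ large enough covers every admissible pair $(u,V)$. Without this (or an equivalent) device your lower bound does not go through; the same gap, and the same repair, apply to the $\re^n$-version of the statement.
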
 

The Yamabe invariant of $W$, which we denote by $Y(W)$,  is the supremum of the Yamabe constants over the set  $\mathcal{M}_W$ of Riemannian metrics on $W$:

$$Y(W):=\sup_{G\in  \mathcal{M}_W}Y(W,[G]).$$

 This  important differential invariant was introduced by Kobayashi in \cite{Kobayashi}  and Schoen in \cite{Schoen1}. It provides information about the capability of $W$ to admit a Rie\-ma\-nnian metric of positive scalar curvature.  More precisely,  the Yamabe invariant is positive   if and only if the manifold admits a metric of positive scalar curvature.

Similarly,   we define  the  $l$th Yamabe invariant of $W$ by

$$Y^l(W):=\sup_{G\in  \mathcal{M}_W}Y^l(W,[G]).$$

For a product   $M\times N$, we define the $l$th $N-$Yamabe invariant as

$$Y^l_N(M\times N):=\sup_{g\in  \mathcal{M}_M^Y,\ h\in\mathcal{{M}}_N}Y_N(M\times N, g+h),$$

where $\mathcal{M}_M^Y$ is the subset of Yamabe metrics of $\mathcal{M}_M$, i.e., metrics that realize the Yamabe constant.
By a result due to Pollack \cite{Pollack} we know that  for any Riemannian manifold of dimension $n\geq 3$ with positive Yamabe invariant there exist metrics with a  constant scalar  curvature $n(n-1)$  and  arbitrarily large volume. Therefore, if we take  the supremum  among $\mathcal{M}_M$ instead of $\mathcal{M}_M^Y$, $Y^l_N(M\times N)$ would be infinite (see the variational characterization of the  $l$th $N$-Yamabe constant in Section \ref{Preliminaries}).

In Section \ref{section2inv}, we point out several facts about the second Yamabe invariant and the second $N-$Yamabe invariant.  Also, taking into account  some known bounds for the Yamabe invariant,  we show lower bounds for these invariants.

Note that frequently  in the literature, the Yamabe constant and the Yamabe invariant are  called Yamabe invariant and $\sigma-$invariant, respectively. Something similar happens for the $l$th Yamabe invariant and for  the $l$th Yamabe constant. In this article we are not going to use these denominations.

\subsection*{Acknowledgements}
The author would like to thank the hospitality
of the members of SFB 1085 Higher Invariant at the University of  Regensburg, where he stayed
during the preparation of this work. He would like to express his gratitude to Bernd Ammann
for very helpful discussions, remarks, and for sharing his expertise.  Also, he would like to thank Bernd Ammann's research group for their kind hospitality. He would like to thank to Jimmy Petean for  many valuable  conversations and  useful observations. Finally, the author would like to thank to the anonymous reviewer for his/her valuable comments and suggestions on the manuscript.

\section{Preliminaries}\label{Preliminaries}

\subsection{Notation.}

$\ $

Let $(W^k,G)$ be a Riemannian manifold. Throughout this article we will denote with $C^{\infty}_{\geq 0}(W)$   and  $L^{p}_{\geq 0}(W)$  the set of   non-negative functions on $W$,  excluding the  zero function, that belong to $C^{\infty}(W)$ and   $L^p(W)$, respectively. We are going to denote with 
  $C^{\infty}_{>0}(W)$  the  positive functions of $C^{\infty}_{\geq 0}(W)$.  $L^{p}_{\geq 0,\ c}(W)$  and $C^{\infty}_{\geq 0,\ c}(W)$  will be  the subset of functions with   compact support that belong to $L^{p}_{\geq 0}(W)$ and   $C^{\infty}_{\geq 0}(W)$, respectively.

Let $H$ be one of these spaces of functions:  $C^{\infty}(W)$, $C^{\infty}_c(W)$ or $H^2_1(W)$. We  write  $Gr^l(H)$  for the set of all $l-$dimensional subspaces of $H$.  If $u\in H$, we denote with $Gr^l_u(H)$ the elements of $Gr^l(H)$ that satisfy: If  $V=\spn(v_1,\dots,v_l)$, then   $\tilde{V}=\spn(u^{p_k-2}v_1,\dots, u^{p_k-2}v_l)$   belongs to  $Gr^l(H)$.

\subsection{Results from the literature}\label{resultsliterature}

$\ $

Here, for the convenience of the reader, we state some important results from the literature that we are going to use in the next sections.

 The following theorem is due to Ammann and Humbert (\cite{Ammann-Humbert}, Theorem 5.4 and Proposition 5.6):

\begin{Theorem}\label{AH-bounds}
Let  $(W^k,G)$ be a closed Riemannian manifold $(k\geq 3)$ with non-negative Yamabe constant. Then,    

\begin{equation}\label{bound2yamabeconstant}
 \nonumber2^ {\frac{2}{k}}Y(W,[G])\leq Y^2(W,[G])\leq
[Y(W,[G])^ {\frac{k}{2}}+Y(S^{k})^{\frac{k}{2}}]^{\frac{2}{k}}.
\end{equation}

$\ $

Moreover,  if $Y^2(W,[G])$ is attained and $W$ is connected, then the left hand side inequality is strict.

\end{Theorem}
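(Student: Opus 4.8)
The plan is to prove the two inequalities separately and then establish the strictness, throughout working with the spectral characterization $Y^2(W,[G])=\inf_{\tilde G\in[G]}\lambda_2(L_{\tilde G})\,\mathrm{vol}(W,\tilde G)^{2/k}$ together with the conformal covariance $L_{\tilde G}(\varphi)=f^{1-p_k}L_G(f\varphi)$ for $\tilde G=f^{p_k-2}G$. This identity turns a Rayleigh quotient for $L_{\tilde G}$ into the quotient $\int_W uL_Gu\,dv_G/\int_W u^2f^{p_k-2}\,dv_G$ under the substitution $u=f\varphi$, which is what links the eigenvalue problem to the Yamabe functional $J$.

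For the lower bound I would fix any $\tilde G=f^{p_k-2}G\in[G]$ and take a second eigenfunction $\varphi$ of $L_{\tilde G}$. Since the ground state is positive and $\varphi$ is $L^2(\tilde G)$-orthogonal to it, $\varphi$ changes sign; writing $\varphi=\varphi^+-\varphi^-$ and setting $u^\pm=f\varphi^\pm$ produces two functions on $W$ with disjoint supports. A Kato-type computation (testing the eigenvalue equation against $\varphi^\pm$) gives that the numerator of $J(u^\pm)$ equals $\lambda_2(L_{\tilde G})\int(\varphi^\pm)^2\,dv_{\tilde G}$. Combining $J(u^\pm)\ge Y(W,[G])$ with the Hölder inequality (valid since $p_k>2$) $\int(\varphi^\pm)^2\,dv_{\tilde G}\le\big(\int(\varphi^\pm)^{p_k}\,dv_{\tilde G}\big)^{2/p_k}(V^\pm)^{2/k}$, where $V^\pm$ is the $\tilde G$-volume of $\mathrm{supp}\,\varphi^\pm$, yields $Y(W,[G])\le\lambda_2(L_{\tilde G})(V^\pm)^{2/k}$. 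Abbreviating $\lambda_2=\lambda_2(L_{\tilde G})$, this gives $V^\pm\ge(Y(W,[G])/\lambda_2)^{k/2}$, hence $\mathrm{vol}(W,\tilde G)\ge V^++V^-\ge 2(Y(W,[G])/\lambda_2)^{k/2}$, which rearranges to $\lambda_2\,\mathrm{vol}(W,\tilde G)^{2/k}\ge 2^{2/k}Y(W,[G])$; taking the infimum over $\tilde G$ finishes this inequality.

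For the upper bound I would exploit disjoint supports in the opposite direction. Given smooth $u_1,u_2\ge 0$ with disjoint supports, set $f=u_1+u_2$ and test the min--max for $\lambda_2(L_{\tilde G})$, $\tilde G=f^{p_k-2}G$, against the two-dimensional space of functions $v$ with $fv\in\mathrm{span}(u_1,u_2)$. Because the cross terms $\int u_1L_Gu_2\,dv_G$ vanish, the Rayleigh quotient of such a $v$ equals $(\alpha^2\mu_1+\beta^2\mu_2)/(\alpha^2N_1+\beta^2N_2)$ with $\mu_i=\int u_iL_Gu_i\,dv_G$ and $N_i=\|u_i\|_{p_k}^{p_k}$, whose supremum is $\max(\mu_1/N_1,\mu_2/N_2)$; since $\mathrm{vol}(W,\tilde G)=N_1+N_2$, optimizing the scalings of $u_1,u_2$ (balancing the two ratios) yields $Y^2(W,[G])\le\big(J(u_1)^{k/2}+J(u_2)^{k/2}\big)^{2/k}$. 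Choosing $u_1$ to approximate the global Yamabe minimizer of $(W,[G])$ away from a point $q$ (so $J(u_1)\to Y(W,[G])$) and $u_2$ a standard Aubin bubble concentrating at $q$ inside a small geodesic ball (so $J(u_2)\to Y(S^k)$), with supports made disjoint by the concentration, gives the claimed bound after passing to the limit.

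Finally, for the strictness assume $Y^2(W,[G])$ is attained with $W$ connected and suppose equality held in the lower bound. Tracing the estimate, equality forces $J(u^+)=Y(W,[G])$, so $u^+=f\varphi^+$ would minimize the Yamabe functional; but $u^+\ge 0$ vanishes on the open set $\{\varphi<0\}$, contradicting the strict positivity of Yamabe minimizers on a connected manifold (elliptic regularity and the strong maximum principle for the Yamabe equation). Hence the inequality is strict. I expect the main obstacle to be the rigorous treatment of \emph{generalized} metrics: establishing the spectral theory of $L_{\tilde G}$, the regularity of $\varphi^\pm$ needed both for the Kato-type identity and for the maximum-principle argument, and the continuity of $\lambda_2\,\mathrm{vol}^{2/k}$ under the approximation replacing the non-positive weight $f=u_1+u_2$ by $f+\e$, which is what legitimizes the disjoint-support test configuration in the upper bound.
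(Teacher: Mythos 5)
This statement is not proved in the paper at all: it is imported from Ammann--Humbert (Theorem 5.4 and Proposition 5.6 of that reference), so the only internal comparison available is with the places where the paper reproduces those arguments, and your reconstruction agrees with them. Your lower bound --- positive ground state, sign-changing second eigenfunction, Kato-type identity for $u^{\pm}=f\varphi^{\pm}$, H\"older on each nodal region, then adding the two volume bounds --- is exactly the argument the paper sketches for Lemma \ref{lowerbound2Nyamabe} and carries out in the proof of Theorem \ref{non compact}; your bookkeeping $V^{\pm}\geq\left(Y(W,[G])/\lambda_2\right)^{k/2}$ is an equivalent rearrangement of the paper's double-H\"older step producing the factor $2^{2/p_k}$. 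Your upper bound --- two disjoint bumps, vanishing cross terms, supremum of a ratio of diagonal quadratic forms, balancing of scalings, with one bump a truncated Yamabe minimizer and the other an Aubin bubble --- is the same mechanism as the paper's proof of Theorem \ref{limitY2}. The one genuine difference is procedural: to legitimize the degenerate conformal factor $u_1+u_2$ you regularize to $u_1+u_2+\varepsilon$ and pass to the limit, whereas the paper (following Ammann--Humbert) instead invokes the variational characterization of $Y^2$ over generalized metrics recorded in Section \ref{variationalcharact}; both devices work, yours being more self-contained, the paper's being shorter once that characterization is granted. Your strictness argument (equality forces $J(u^{+})=Y(W,[G])$, contradicting strict positivity of Yamabe minimizers on a connected manifold) is likewise the Ammann--Humbert argument, and the obstacles you flag --- spectral theory and regularity of eigenfunctions for generalized metrics, needed to run the equality tracing at an attaining generalized metric --- are precisely the technical content that the paper outsources to the citation rather than proving.
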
 

We summarise the main results   of \cite{Ammann-Humbert}  (Theorem 1.4, 1.5, and 1.6) in the next theorem:

\begin{Theorem}\label{mainresultAH} Assume  the same hypothesis as in the theorem above:

\begin{itemize}
\item[a)] $Y^2(W,[G])$ is attained by a generalized metric if

$$Y^2(W,[G])<[Y(W,[G])^ {\frac{k}{2}}+Y(S^{k})^{\frac{k}{2}}]^{\frac{2}{k}}.$$

Furthermore, if $Y^2(W,[G])>0$ this generalized metric is of the form $|w|^{p-2}G$ with $w\in C^{3,\alpha}(W)$  a nodal solution of the Yamabe equation.

\item[b)] The inequality in $a)$ is fulfilled  by  any non locally conformally flat manifold with 
$Y(W,[G])>0$ and $k\geq 11$ or  
 $Y(W,[G])=0$ and $k\geq 9$. 
\end{itemize}

\end{Theorem}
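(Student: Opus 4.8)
The plan is to treat the two parts separately, as they rest on different techniques. The cornerstone of part (a) is a variational reformulation of the second Yamabe constant in terms of pairs of disjointly supported functions. Using the Courant--Fischer principle for $\la_2(L_{\tilde G})$ together with the conformal covariance of $L_G$, I would rewrite the double infimum defining $Y^2$ and show, after optimizing the conformal factor for a fixed two-plane, that
\begin{equation}\label{reform}
Y^2(W,[G])=\inf\left\{\left(J(u)^{\frac{k}{2}}+J(v)^{\frac{k}{2}}\right)^{\frac{2}{k}}\ :\ u,v\in H^2_1(W)\setminus\{0\},\ u\cdot v\equiv 0\right\}.
\end{equation}
The heuristic behind this identity is that the generalized metric $|w|^{p_k-2}G$ attached to a nodal solution $w$ has $w^{+}$ and $w^{-}$ as eigenfunctions for a common eigenvalue, so the two nodal domains contribute additively in the $\frac{k}{2}$-power. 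Formula \eqref{reform} also explains the threshold: letting $u$ approach a positive Yamabe minimizer (so $J(u)\to Y(W,[G])$) while $v$ is a bubble concentrating at a point away from the support of $u$ (so $J(v)\to Y(S^{k})$) produces a test pair whose value tends to $[Y(W,[G])^{\frac{k}{2}}+Y(S^{k})^{\frac{k}{2}}]^{\frac{2}{k}}$, which is precisely the energy lost to a concentrating bubble.

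To establish attainment I would pass to the subcritical regime. For $q<p_k$, replace $\|\cdot\|_{p_k}$ by $\|\cdot\|_q$ in $J$ and in \eqref{reform}, obtaining a constant $Y^2_q$. Since the embedding $H^2_1(W)\hookrightarrow L^q(W)$ is compact for $q<p_k$, a direct minimization yields a minimizing pair $(u_q,v_q)$ of disjointly supported functions, each positive on its support, which one checks solve subcritical equations $L_G u_q=\la_q u_q^{q-1}$ and $L_G v_q=\mu_q v_q^{q-1}$; after rescaling so the two Rayleigh levels coincide, $w_q=u_q-v_q$ is a sign-changing solution of the subcritical Yamabe equation. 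The crux is the limit $q\to p_k$. Normalizing $\|w_q\|_q=1$ and using the uniform energy bound from $Y^2_q\to Y^2$, I would run a concentration--compactness / blow-up analysis on $(w_q)$. The only way strong $L^{p_k}$ convergence can fail is a splitting of $w_q$ into a nonzero weak limit plus finitely many bubbles, each carrying at least the energy $Y(S^{k})$ of a standard sphere bubble; combined with the energy $Y(W,[G])$ of the residual part, any such splitting would force $Y^2\ge[Y(W,[G])^{\frac{k}{2}}+Y(S^{k})^{\frac{k}{2}}]^{\frac{2}{k}}$. The strict inequality in the hypothesis rules this out, so $(w_q)$ converges strongly and its limit $w$ is a nonzero minimizer of \eqref{reform}. \emph{This compactness step is the main analytic obstacle.}

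The limit $w$ solves $L_G w=Y^2\,|w|^{p_k-2}w$ weakly. A standard elliptic bootstrap (Trudinger-type regularity at the critical exponent to gain $L^\infty$-bounds, then Schauder estimates) upgrades $w$ to $C^{3,\alpha}(W)$; the regularity stops at $C^{3,\alpha}$ because the nonlinearity $|w|^{p_k-2}w$ is only $C^{1,\alpha}$ near the nodal set. Away from $w^{-1}(0)$ the conformal factor $|w|^{p_k-2}$ is positive and smooth, so $|w|^{p_k-2}G$ is a genuine generalized metric realizing $Y^2(W,[G])$, and when $Y^2>0$ neither nodal domain can be empty, so $w$ is genuinely sign-changing. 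This gives the regularity and nodal assertions of part (a).

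For part (b) the task is to verify the strict inequality by exhibiting a test pair that beats the threshold. I would take $u$ to be (a small perturbation of) a positive minimizer for $Y(W,[G])$ and $v$ an Aubin-type bubble $B_\e$ concentrated at a point $x_0$ where the Weyl tensor of $G$ does not vanish, cut off and placed so that $u$ and $B_\e$ have disjoint supports. The classical Aubin expansion shows that on a non locally conformally flat manifold the leading correction to $J(B_\e)$ below $Y(S^{k})$ is governed by $|\mathrm{Weyl}_G(x_0)|^2$ and is strictly negative, provided the dimension is large enough that this Weyl term dominates the positive error terms in the expansion. This is the origin of the conditions $k\ge 11$ when $Y(W,[G])>0$ and $k\ge 9$ when $Y(W,[G])=0$, the lower threshold in the scalar-flat case reflecting the absence of the competing positive scalar-curvature contribution. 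Plugging the resulting pair into \eqref{reform} then gives $Y^2(W,[G])<[Y(W,[G])^{\frac{k}{2}}+Y(S^{k})^{\frac{k}{2}}]^{\frac{2}{k}}$. The delicate point here is the precise curvature expansion of $J(B_\e)$ and the bookkeeping of remainder terms that pins down the dimensional thresholds.
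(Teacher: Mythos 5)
First, a point of reference: the paper does not prove this statement at all --- it is an explicit summary of Theorems 1.4, 1.5 and 1.6 of Ammann--Humbert \cite{Ammann-Humbert} --- so your attempt must be measured against their proof. Your disjoint-support reformulation of $Y^2(W,[G])$ is correct when $Y(W,[G])\geq 0$ (the ``$\leq$'' direction follows from testing with $w=au+bv$, $V=\spn(u,v)$ and optimizing $a,b$; the ``$\geq$'' direction from the H\"older argument of Proposition 5.6 of \cite{Ammann-Humbert} applied to $w^{+},w^{-}$ for a second eigenfunction of a near-minimizing metric), and your overall mechanism --- subcritical approximation plus concentration--compactness, with the strict inequality excluding bubbles of energy $Y(S^k)^{k/2}$ --- is indeed the mechanism of \cite{Ammann-Humbert}. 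The genuine gap is the sentence asserting that, after rescaling, $w_q=u_q-v_q$ is a sign-changing solution of the subcritical Yamabe equation. A minimizing pair of disjointly supported functions only gives solutions of the equation separately on the open sets $\{u_q>0\}$ and $\{v_q>0\}$, vanishing on their boundaries; the difference solves the equation across the common interface only if the normal derivatives match there, and proving this matching is an optimal-partition/free-boundary regularity problem that your argument never addresses. Without it, $w_q$ (hence the limit $w$) is not known to satisfy $L_Gw=\lambda|w|^{p_k-2}w$ weakly on all of $W$ --- its distributional conformal Laplacian may carry a singular part on the interface --- so the ``furthermore'' clause of a), the existence of a $C^{3,\alpha}$ nodal solution, is not established. (The bare attainment claim survives, since a minimizing pair does produce a generalized metric realizing $Y^2$ via your ``$\leq$'' computation.) Ammann and Humbert avoid this trap entirely: they minimize the eigenvalue functional $u\mapsto\lambda_2(L_{u^{p_k-2}G})\,vol(W,u^{p_k-2}G)^{2/k}$ itself, so the second eigenfunction $\varphi$ of a minimizer automatically satisfies $L_G\varphi=\lambda_2\,u^{p_k-2}\varphi$ globally on $W$, and their key lemma --- optimality together with the equality case of H\"older's inequality in $\int_W u^{p_k-2}\varphi^2dv_G\leq\|u\|_{p_k}^{p_k-2}\|\varphi\|^2_{p_k}$ forces $u=c|\varphi|$ --- converts the eigenvalue equation into the Yamabe equation with no interface analysis. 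If you wish to keep the pair formulation, graft that lemma onto your limit object instead of claiming the gluing.

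Part b) has a second gap. On a connected manifold with $Y(W,[G])\geq0$ a Yamabe minimizer is strictly positive, so it cannot literally have support disjoint from the bubble: you must truncate it at some scale $\rho$, which costs a capacity error of order $\rho^{k-2}$, while truncating the bubble costs an error of order $(\e/\rho)^{k-2}$. The dimensional thresholds come precisely from balancing these losses against the Weyl gain of order $\e^4$ (heuristically, with $\rho\sim\sqrt{\e}$ both errors are of order $\e^{(k-2)/2}$, and $\e^4\gg\e^{(k-2)/2}$ forces $k\geq 11$); they do \emph{not} come from the internal ``error terms in the Aubin expansion'', which the Weyl term already dominates for every non locally conformally flat manifold of dimension $k\geq6$ in the classical Yamabe problem. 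So your sketch names the right ingredients but attributes the thresholds to the wrong competition, and the bookkeeping that actually distinguishes $k\geq11$ (for $Y(W,[G])>0$) from $k\geq9$ (for $Y(W,[G])=0$) --- in \cite{Ammann-Humbert} this is an analysis of the interaction integrals between the bubble and the first eigenfunction/minimizer --- is entirely absent.
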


In \cite{A-F-P}, Akutagawa, Florit, and Petean studied the behavior of the Yamabe cons\-tant and the $N-$Yamabe constant  on Riemannian products. More precisely,  they proved the following important result (\cite{A-F-P}, Theorem 1.1):

\begin{Theorem}\label{thmAFP} Let $(M^m,g)$ and $(N^n,h)$ be closed Riemannian manifolds. In addi\-tion, assume that $(M,g)$ is  of positive scalar curvature and $m\geq 2$. Then,  

$$\lim_{t \to +\infty}Y(M\times N,[g+th])=Y(M\times\re^n,[g+g_e]),$$

and 

$$\lim_{t \to +\infty}Y_N(M\times N,g+th)=Y_{\re^n}(M\times\re^n,g+g_e).$$

\end{Theorem}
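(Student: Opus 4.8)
The plan is to prove each equality as a two--sided estimate, bounding $\limsup_{t\to+\infty}$ from above and $\liminf_{t\to+\infty}$ from below by the common value $Y(M\times\re^n,[g+g_e])$ (respectively $Y_{\re^n}(M\times\re^n,g+g_e)$). The geometric input is that enlarging the second factor flattens it: in $h$--normal coordinates $y$ about a point $q\in N$, the rescaled coordinates $z=\sqrt{t}\,y$ turn $th$ into $g_e+O(|z|^2/t)$, so on every fixed compact set in the $z$--variable the rescaled $(N,th)$ converges to $(\re^n,g_e)$, while at the same time $s_{g+th}=s_g+t^{-1}s_h\to s_g=s_{g+g_e}$. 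Since $M\times N$ and $M\times\re^n$ both have dimension $k=m+n$, the constants $a_k$ and $p_k$ agree on both sides, and the two limits should be read off from this convergence of metrics and curvatures.

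For the upper bound I would transplant nearly optimal test functions. Given $\varepsilon>0$, pick $\phi\in C^\infty_c(M\times\re^n)$ with $J_{g+g_e}(\phi)<Y(M\times\re^n,[g+g_e])+\varepsilon$ and $\mathrm{supp}\,\phi\subset M\times B_R(0)$. Once $t$ is large enough that $R/\sqrt{t}$ is below the injectivity radius of $(N,h)$, I pull $\phi$ back through the scaled chart $q'\mapsto z=\sqrt{t}\,\exp_q^{-1}(q')$ and extend it by zero, producing $\tilde\phi_t\in C^\infty_c(M\times N)$. The metric and volume approximations above give $J_{g+th}(\tilde\phi_t)\to J_{g+g_e}(\phi)$, hence $\limsup_{t\to+\infty}Y(M\times N,[g+th])\le Y(M\times\re^n,[g+g_e])+\varepsilon$; letting $\varepsilon\to0$ closes this direction.

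The lower bound is the main obstacle. Let $u_t$ realize $Y(M\times N,[g+th])$, normalized by $\|u_t\|_{p_k}=1$, so that its energy $E_t=\int a_k|\nabla u_t|^2+s_{g+th}u_t^2\,dv_{g+th}$ equals $Y(M\times N,[g+th])$ and stays bounded. Fixing a small radius $\rho$, I cover $N$ by geodesic $h$--balls $B_i$ of radius $\rho$ with a subordinate partition $\sum_i\chi_i^2=1$ and split $E_t=\sum_i\big(\int a_k|\nabla(\chi_i u_t)|^2+s_{g+th}(\chi_i u_t)^2\big)-\sum_i\int a_k|\nabla\chi_i|^2u_t^2$. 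The potential term reassembles exactly, and the localization error is negligible: $|\nabla\chi_i|^2_{th}=t^{-1}|\nabla\chi_i|^2_h=O(t^{-1}\rho^{-2})$, while $\int u_t^2\,dv_{g+th}\le \Vol(M\times N,g+th)^{2/k}=O(t^{n/k})$, so the error is $O(t^{\,n/k-1}\rho^{-2})=o(1)$ precisely because $n<k$, even though $\Vol(M\times N,g+th)=O(t^{n/2})$ diverges. Each $\chi_i u_t$ is compactly supported in $M\times B_i$, and after the $\sqrt{t}$--rescaling it becomes a compactly supported function on a large Euclidean ball inside $(M\times\re^n,g+g_e)$, whence $\int a_k|\nabla(\chi_i u_t)|^2+s_{g+th}(\chi_i u_t)^2\ge (Y(M\times\re^n,[g+g_e])-o(1))\,\|\chi_i u_t\|_{p_k}^2$. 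Writing $m_i=\|\chi_i u_t\|_{p_k}^{p_k}\in[0,1]$ and using $2/p_k<1$, the elementary bound $m_i^{2/p_k}\ge m_i$ gives $\sum_i\|\chi_i u_t\|_{p_k}^2=\sum_i m_i^{2/p_k}\ge\sum_i m_i$, so $Y(M\times N,[g+th])\ge(Y(M\times\re^n,[g+g_e])-o(1))\sum_i m_i-o(1)$; letting first $t\to+\infty$ and then $\rho\to0$ yields $\liminf_{t\to+\infty}Y(M\times N,[g+th])\ge Y(M\times\re^n,[g+g_e])$. The genuinely delicate point is to guarantee $\sum_i m_i\to1$, i.e. that no $L^{p_k}$--mass is undercounted in the transition regions of the partition; this requires a concentration--compactness analysis of $u_t$, showing that concentration occurs at scale $O(1/\sqrt t)$ (hence inside single balls, after possibly averaging over the cover), that the only blow--up limits are $(M\times\re^n,g+g_e)$ and flat $\re^{m+n}$ (for which $Y(\re^{m+n})=Y(S^{m+n})\ge Y(M\times\re^n,[g+g_e])$), and that vanishing is excluded by the bounded energy together with $s_g>0$.

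Finally, the second equality is proved by running the identical argument inside the class of functions depending only on the second factor: the transplanted competitors are chosen to depend only on $\re^n$, the minimizers $u_t$ depend only on $N$, and the scaling, localization, and power--inequality estimates carry over verbatim. Throughout, the two points requiring real care are the uniform control of the metric and curvature approximation on the Euclidean balls of radius $\sqrt{t}\rho\to\infty$ and, above all, the mass--recovery in the lower bound, for which the dimensional gain $n<k=m+n$ and the concavity of $s\mapsto s^{2/p_k}$ are the essential ingredients.
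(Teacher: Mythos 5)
First, a point of comparison: the paper does not prove this theorem at all --- it is imported verbatim from Akutagawa--Florit--Petean (\cite{A-F-P}, Theorem 1.1) --- so your attempt has to be measured against that paper's proof. Your upper bound is correct and is the standard transplantation argument (the same device the present paper uses to prove its Theorem \ref{limitY2}): in rescaled normal coordinates $z=\sqrt{t}\,y$ one has $th=g_e+O(|z|^2/t)$ on $|z|\le R$ and $s_{g+th}=s_g+t^{-1}s_h\to s_g$, so transplanted test functions give $\limsup_{t\to+\infty}Y(M\times N,[g+th])\le Y(M\times\re^n,[g+g_e])$, and likewise for the $N$-Yamabe constants.

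The lower bound, however, contains a genuine gap, and it sits exactly where you flag it: nothing you write shows $\sum_i m_i\to 1$. Since $\sum_i\chi_i^2=1$ forces $\sum_i\chi_i^{p_k}<1$ on the transition regions, the deficit $1-\sum_i m_i$ is the $L^{p_k}$-mass of $u_t$ weighted by $1-\sum_i\chi_i^{p_k}$, and a priori the minimizers could concentrate precisely in those overlaps --- the concentration scale and location are not known in advance, so one cannot assert that the mass avoids the fixed transition annuli; the elementary inequalities you invoke ($m_i^{2/p_k}\ge m_i$, concavity) are insensitive to where the mass sits and cannot exclude this. Saying that this ``requires a concentration--compactness analysis'' names the missing theorem rather than proving it: that analysis \emph{is} the analytic content of the result. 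Concretely, one needs (i) existence of the minimizers $u_t$, which requires $Y(M\times N,[g+th])<Y(S^{m+n})$ (available from Aubin--Schoen because a nontrivial product is never conformal to the round sphere, but you never justify it); (ii) either a uniform $L^\infty$ bound on $u_t$ or a blow-up argument using the Caffarelli--Gidas--Spruck classification, showing that unbounded suprema force energy at least $Y(S^{m+n})\ge Y(M\times\re^n,[g+g_e])$, so that case is harmless; and (iii) the no-vanishing estimate $\max u_t\ge c>0$ from the maximum principle and $s_g>0$. This is in fact how the lower bound is obtained in \cite{A-F-P}: not by a partition of unity, but by rescaling the minimizers about their concentration points and passing, via elliptic estimates, to a nonzero limit solution of the Yamabe equation on $M\times\re^n$ (or on flat $\re^{m+n}$), whose energy is then compared with $\liminf_{t\to+\infty}Y(M\times N,[g+th])$. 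As written, your localization scheme does not close, and the same gap recurs in your treatment of the second equality.
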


If $(M,g)$ is a closed manifold, then $(M\times\re^n, g+g_e)$ is  complete, with positive injective radius  and  bounded geometry. Hence,  the Sobolev embedding theorem holds (cf. \cite{Hebey}, Theorem 3.2).     If we assume that  the scalar curvature $s_g$ is positive, then it is not difficult to see that $Y(M^m\times \re^n,g+g_e)>0$ (see  Section \ref{YCnoncompact} for the definition of the Yamabe constant in the non-compact case).  If $m,n\geq 2$,  
  it was proved   in  (\cite{A-F-P}, Theorem 1.3) that 

\begin{equation}\label{upperboundYMR}
0<Y(M\times\re^n,[g+g_e])<Y(S^{m+n}).
\end{equation}

\subsection{Yamabe constant  on   non-compact manifolds}\label{YCnoncompact}

$\ $

Note that in the definition of the  Yamabe constant  
the infimum of the Yamabe functional  could be taken as well over $C^{\infty}_{>0}(W)$, $C^{\infty}_c(W)-\{0\}$ or $H^2_1(W)-\{0\}$ and  it does not change. 
Thus, it seems natural (cf. \cite{SchoenYau})  to define the Yamabe constant 
of  a non-compact manifold $(W^k,G)$   as

$$Y(W,[G]):=\inf_{u\in C^{\infty}_c(W)-\{0\} } \frac{\int_W a_k|\nabla u|^2_G+s_Gu^2dv_G}{\|u\|^2_{p_{k}} }.$$

The Yamabe constant, also in the noncomapct setting,  is always  bounded from above by the Yamabe constant of $(S^n,g^n_0)$. Since $Y(S^k,[g_0^k])=Y(S^k)$, we have that  $Y(W)\leq Y(S^k)$.    

\subsection{Variational characterization of the $l$th Yamabe constant}\label{variationalcharact}

$\ $

 It is well known the min-max characterization of the $l$th eigenvalue of conformal Laplacian of a closed manifold $(W^k,G)$:

\begin{eqnarray}
\nonumber\lambda_l(L_G) &  = &\inf_{V\in Gr^l(C^{\infty}(W))}\sup_{v\in V-\{0\}}\frac{\int_W L_G(v)vdv_G}{\|v\|_2^2}  \\
\nonumber& & \\  
 \nonumber &=&\inf_{V\in Gr^l(H^2_1(W))}\sup_{v\in V-\{0\}}\frac{\int_W a_k|\nabla v|^2_G+s_Gv^2dv_G}{\|v\|_2^2}.  
\end{eqnarray}

For any Riemannian metric  $G_u:=u^{p_k-2}G$  in $[G]$, the conformal Laplacian sa\-tis\-fies  the invariance property 

$$L_{G_u}(v)=u^{1-p_k}L_G(uv).$$ 

Since  $vol(W,G_u)=\int_W u^{p_k}dv_G$,  we get  

$$\lambda_l(L_{G_u})vol(W,G_u)^{\frac{2}{k}}=
\inf_{V\in Gr^l(H^2_1(W))}\sup_{v\in V-\{0\}}\frac{\int_W a_k|\nabla v|^2_G+s_Gv^2dv_G}{\int_Wu^{p_{k}-2}v^2dv_G}$$$$\times(\int_Wu^{p_k}dv_G)^{\frac{2}{k}}.$$

Therefore, we have the following characterization of the  $l$th Yamabe constant of $(W,G)$: 

 $$Y^l(W,[G])=\inf_{\substack{u\in C^{\infty}_{>0}(W)\\ V\in Gr^l(H^2_1(W))} }\sup_{v\in V-\{0\}}\frac{\int_W a_k|\nabla v|^2_G+s_Gv^2dv_G}{\int_Wu^{p_{k}-2}v^2dv_G}(\int_Wu^{p_k}dv_G)^{\frac{2}{k}}.$$

If we enlarge the conformal class of $G$, allowing generalized metrics, then we obtain

$$Y^l(W,[G])=\inf_{\substack{
            u\in L^{p_{k}}_{\geq 0}(W)\\
            V\in Gr_u^l(H^2_1(W))}}\sup_{v\in V-\{0\}}\frac{\int_W a_k|\nabla v|^2_G+s_Gv^2dv_G}{\int_Wu^{p_{k}-2}v^2dv_G}(\int_Wu^{p_k}dv_G)^{\frac{2}{k}}.$$

 Let $(M^m\times N^n,g+h)$ be a Riemannian product of closed manifolds with $s_g$  constant. If we consider generalized $N-$metrics instead of $N-$metrics in $[g+h]$, we have  the following variational characterization of the $l$th $N-$Yamabe constant:

$$Y^l_N(M\times N,g+h)=\inf_{\substack{
            u\in L^{p_{m+n}}_{\geq 0}(N)\\
            V\in Gr_u^l(H^2_1(N))}}\sup_{v\in V-\{0\}}\frac{\int_N a_k|\nabla v|^2_{g+h}+s_{g+h}v^2dv_{g+h}}{\int_Nu^{p_{m+n}-2}v^2dv_{g+h}}$$$$\times\big(\int_Nu^{p_{m+n}}dv_{g+h}\big)^{\frac{2}{m+n}}\big(vol(M,g)\big)^{\frac{2}{m+n}}.$$

\section{Second Yamabe constant and second $N-$Yamabe constant on Riemannian products}

\subsection{Second Yamabe constant}\label{2yamabeconstant}

$\ $

Let $(M^m, g)$  be  a closed manifold ($m\geq 2$) of positive scalar curvature,  and   $(N^n,h)$   any closed   Riemannian manifold. Note that $Y(M\times N,[g+th])$ is positive  for $t$ large enough. By Theorem \ref{AH-bounds}, we get 

$$2^ {\frac{2}{k}}Y(M\times N,[g+th]) \leq  Y^2(M\times N,[g+th])\leq [Y(M\times N,[g+th])^{\frac{k}{2}}+Y(S^{k})^{\frac{k}{2}}]^{\frac{2}{k}}, $$

where $k=m+n$. Applying Theorem \ref{thmAFP}  to these inequalities, we obtain the following lemma:

\begin{Lemma}\label{limitc}
Let $(M^m, g)$  be a closed manifold $(m\geq 2)$  of positive scalar curvature  and let  $(N^n,h)$ be  any closed manifold.   Then, 

$$2^ {\frac{2}{m+n}}Y(M\times \re^ n, [g+g_e])\leq \liminf_{t \to +\infty}Y^2(M\times N,[g+th])$$

and

$$\limsup_{t \to +\infty}Y^2(M\times N,[g+th])\leq [Y(M\times\re^n,[g+g_e])^ {\frac{m+n}{2}}+Y(S^{m+n})^{\frac{m+n}{2}}]^{\frac{2}{m+n}}.$$
 \end{Lemma}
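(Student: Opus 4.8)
The plan is to sandwich $Y^2(M\times N,[g+th])$ between two quantities whose limits as $t\to+\infty$ we already control, and then pass to the limit. The two ingredients are Theorem \ref{AH-bounds}, which bounds the second Yamabe constant from above and below in terms of the first Yamabe constant and $Y(S^{m+n})$, and Theorem \ref{thmAFP}, which computes the limit of the first Yamabe constant of the product.

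First I would check that Theorem \ref{AH-bounds} actually applies for $t$ large. Its hypothesis is that the Yamabe constant be non-negative. By Theorem \ref{thmAFP} we have $\lim_{t\to+\infty}Y(M\times N,[g+th])=Y(M\times\re^n,[g+g_e])$, and since $s_g>0$ this limit is strictly positive (as recalled after Theorem \ref{thmAFP}, using \cite{A-F-P}). Hence there is a $t_0$ with $Y(M\times N,[g+th])>0$ for all $t\geq t_0$, so for such $t$ Theorem \ref{AH-bounds} yields, with $k=m+n$,
$$2^{\frac{2}{k}}\,Y(M\times N,[g+th])\ \leq\ Y^2(M\times N,[g+th])\ \leq\ \big[Y(M\times N,[g+th])^{\frac{k}{2}}+Y(S^{k})^{\frac{k}{2}}\big]^{\frac{2}{k}}.$$

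Then I would pass to the limit in each half of this chain. Taking $\liminf_{t\to+\infty}$ in the left inequality and using that $2^{\frac{2}{k}}Y(M\times N,[g+th])\to 2^{\frac{2}{k}}Y(M\times\re^n,[g+g_e])$ gives the first assertion. For the second, I would take $\limsup_{t\to+\infty}$ in the right inequality; since the elementary function $x\mapsto (x^{\frac{k}{2}}+Y(S^{k})^{\frac{k}{2}})^{\frac{2}{k}}$ is continuous on $[0,\infty)$ and $Y(M\times N,[g+th])\to Y(M\times\re^n,[g+g_e])$, the right-hand side converges to $[Y(M\times\re^n,[g+g_e])^{\frac{k}{2}}+Y(S^{k})^{\frac{k}{2}}]^{\frac{2}{k}}$, which is the desired upper bound.

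The argument is essentially a direct consequence of the two quoted theorems combined with a limit, so there is no serious obstacle. The only points that require care are verifying the non-negativity hypothesis of Theorem \ref{AH-bounds} for $t$ large (which is what makes the sandwich legitimate in the first place) and the elementary continuity used to move the limit inside the upper bound.
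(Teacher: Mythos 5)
Your proof is correct and follows exactly the paper's own argument: observe that $Y(M\times N,[g+th])>0$ for $t$ large (so Theorem \ref{AH-bounds} applies), sandwich $Y^2(M\times N,[g+th])$ between $2^{\frac{2}{k}}Y(M\times N,[g+th])$ and $[Y(M\times N,[g+th])^{\frac{k}{2}}+Y(S^k)^{\frac{k}{2}}]^{\frac{2}{k}}$, and pass to the limit via Theorem \ref{thmAFP}. Your explicit remarks on verifying the positivity hypothesis and on the continuity needed to move the limit inside the upper bound only make precise what the paper leaves implicit.
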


When $(M,g)$ is $(S^{m-1},g^{m-1}_0)$ with $m\geq 3$ and $(N,h)$ is $(S^1,g^1_0)$ the  lemma  above implies that 

$$
\lim_{t \to +\infty}Y^2(S^{m-1}\times S^1,g^{m-1}_0+tg^1_0)=2^{\frac{2}{m}}Y(S^{m}).
$$

Here, we used that $Y(S^{m-1}\times \re,g^{m-1}_{0}+g_e)=Y(S^m)$. But, by the inequality (\ref{upperboundYMR}), this is no longer true for $(S^{m-1}\times \re^n,g^{m-1}_{0}+g_e)$ when $n\geq 2$.

\begin{proof}[Proof of Theorem \ref{limitY2}] From  Lemma \ref{limitc}  we only have to prove that 

$$\limsup_{t \to +\infty}Y^2(M\times N,[g+th])\leq2^{\frac{2}{m+n}}Y(M\times \re^n, [g+g_e]).$$ 

 Given $\varepsilon>0$,  let $f=f_{\varepsilon}\in C^{\infty}_{\geq 0,\ c}(M\times \re^n)$   such that 

$$J(f)\leq Y(M\times \re^n,[g+g_e])+\varepsilon.$$ 

 Assume that the support of $f$ is included in $ M\times B_R(0)$,   where $B_R(0)$ is the Euclidean ball centred at $0$ with radius $R$.

  For $q\in N$, we denote with $\exp_{q}^h$  the exponential map at $q$ with respect to the metric $h$  and with  $B_{\delta}^h(0_{q})$  the ball of radius $\delta$  centred at $0_{q}\in T_{q}N$.

   Let $q_1$ and $q_2$ be two points on $N$,  and consider their normal neighbourhoods  
$U_1=\exp_{q_1}^h(B_{\delta}^h(0_{q_1}))$ and $U_2=\exp_{q_2}^h(B_{\delta}^h(0_{q_2}))$.
We are going to choose $\delta>0$, such that $U_1$ and $U_2$ are disjoint sets and for  any  normal coordinate system $x=(x_1,\dots,x_n)$, we have

$$(1+\varepsilon)^{-1}dv_{g_e}\leq dv_h\leq(1+\varepsilon)dv_{g_e}.$$

 Note that for  the metric $t^2h$, we have $B_{\delta}^h(0_{q_i})=B_{t\delta}^{t^2h}(0_{q_i})$. Therefore, if we consider a normal coordinate system $y=(y_1,\dots,y_n)$ 
 with respect to the metric $t^2h$, we get 
 $$(1+\varepsilon)^{-1}dv_{g_e}\leq dv_{t^2h}\leq(1+\varepsilon)dv_{g_{e}}$$
 in $B_{t\delta}^{t^2h}(0_{q_i})$.

Let $t_1$ be such that $t_1\delta>R$. For $t\geq t_1$,  we are going to identify $B_{t\delta}(0)\subseteq R^n$ with $U_i=\exp_{q_i}^{t^2h}(B_{t\delta}^{t^2h}(0_{q_i}))$. Hence, 

$$M\times B_R(0)\subseteq M\times B_{t\delta}(0)\simeq M\times U_i\subseteq M\times N.$$

Let $\phi_i:M\times N\longrightarrow \re$  
 defined by 
 $$\phi_i(p,q):= \left\{
\begin{array}{c l}
 f(p,q) & (p,q) \in M\times U_i,\\
 0 &  (p,q) \not\in M\times  U_i,\\
 \end{array}
\right.$$

and  let us consider $\phi:M\times N\longrightarrow \re$ given by $$\phi:=\phi_1+\phi_2.$$

 Clearly, $\phi_i\in C^{\infty}_{\geq 0}(M\times N)$, $\phi\in L^{p_{m+n}}_{\geq 0}(M\times N)$, and 
 the subspace  $V_0:=\spn(\phi_1,\phi_2)$ belongs to  $Gr_{\phi}^2(M\times N)$.   

If we choose $t_2$ such that  $s_{g+th}\leq (1+\epsilon)s_g$ for ${t}\geq {t_2}$, then taking ${t}\geq t_3:= \max(t_1^2,{t_2})$,  it is not difficult to see that   
\begin{equation}\label{boundmetric1}
\int_{M\times U_i}a_{m+n}|\nabla \phi_i|_{g+th}^ 2+s_{g+th}\phi_i^2 dv_{g+th}
\end{equation}

$$
 \leq (1+\varepsilon)^3 \int_{M\times B_{R}(0)}a_{m+n}|\nabla \phi_i|_{g+g_e}^ 2+s_{g}\phi_i^2 dv_{g+g_e},
$$  

and 

\begin{equation}\label{boundmetric2}
\int_{M\times B_R(0)} \phi_i^{p_{m+n}}dv_{g+g_e}\leq  (1+\varepsilon)\int_{M\times U_i} \phi_i^{p_{m+n}}dv_{g+th}.
\end{equation}

By the variational characterization of the second Yamabe constant we get   

$\ $ 

 $Y^2(M\times N,[g+th])$

\begin{align*}
 &\leq \sup_{v\in V_0-\{0\}}\frac{\int_{M\times N}a_{m+n}|\nabla v|_{g+th}^ 2+s_{g+th}v^2 dv_{g+th}}{\int_{M\times N}\phi^{p_{m+n}-2}v^2dv_{g+th}}\\
&\ \ \ \ \ \ \ \ \ \ \ \ \ \ \ \ \ \ \ \ \ \ \ \times\Big(\int_{M\times N} \phi^{p_{m+n}}dv_{g+th}\Big)^{\frac{2}{m+n}}\\
&=\sup_{(\alpha_1,\alpha_2)\in \re^2-\{0\}}\frac{\sum_{i=1}^2\alpha_i^2(\int_{M\times N}a_{m+n}|\nabla \phi_i|_{g+th}^ 2+s_{g+th}\phi_i^2 dv_{g+th})}{\int_{M\times N}\alpha^2_1\phi_1^{p_{m+n}}+\alpha_2^2\phi_2^{p_{m+n}}dv_{g+th}}\\
&\ \ \ \ \ \ \ \ \ \ \ \ \ \ \ \ \ \ \ \ \ \ \  \times \Big(\int_{M\times N} \phi_1^{p_{m+n}}+\phi_2^{p_{m+n}}dv_{g+th}\Big)^{\frac{2}{m+n}}\\
&=  2^{\frac{2}{m+n}}\sup_{(\alpha_1,\alpha_2)\in \re^2-\{0\}}\frac{\sum_{i=1}^2\alpha_i^2(\int_{M\times N}a_{m+n}|\nabla \phi_i|_{g+th}^ 2+s_{g+th}\phi_i^2 dv_{g+th})}{(\alpha^2_1+\alpha_2^2)\|\phi_1\|_{p_{m+n}}^{2}}
\end{align*}

In the last equality, we used that  $\|\phi_1\|_{p_{m+n}} =\|\phi_2\|_{p_{m+n}}$. Applying the inequality (\ref{boundmetric2}), we obtain    

$$Y^2(M\times N,[g+th])\leq 2^{\frac{2}{m+n}}(1+\varepsilon)^{\frac{2}{p_{m+n}}} $$

$$\times\sup_{(\alpha_1,\alpha_2)\in \re^2-\{0\}}\frac{\sum_{i=1}^2\alpha_i^2(\int_{M\times N}a_{m+n}|\nabla \phi_i|_{g+th}^ 2+s_{g+th}\phi_i^2 dv_{g+th})}{(\alpha^2_1+\alpha_2^2) (\int_{M\times B_R(0)} \phi_1^{p_{m+n}}dv_{g+g_e})^{\frac{2}{p_{m+n}}}}.$$

By inequality (\ref{boundmetric1}), for any  $t\geq t_3$, we have

$$
Y^2(M\times N,[g+th])\leq  (1+\varepsilon)^{\frac{4(m+n)-2}{m+n}}2^{\frac{2}{m+n}}$$

$$\times\frac{\int_{M\times B_R(0)}a_{m+n}|\nabla f|_{g+g_e}^ 2+s_{g}f^2 dv_{g+g_e}}{(\int_{M\times B_R(0)}f^{p_{m+n}}dv_{g+g_e})^{\frac{2}{p_{m+n}}}}
= (1+\varepsilon)^{\frac{4(m+n)-2}{m+n}}2^{\frac{2}{m+n}}J(f)$$

$$
\leq  (1+\varepsilon)^{\frac{4(m+n)-2}{m+n}}2^{\frac{2}{m+n}}\big(Y(M\times \re^n,g+g_e)+\varepsilon\big)
.$$

Finally, letting $\varepsilon$ go to $0$,  we obtain that

  $$\limsup_{t\to \infty} Y^2(M\times N,[g+th])\leq 2^{\frac{2}{m+n}} Y(M\times \re^n,[g+g_e]),
  $$
which finishes the proof.

\end{proof}

\begin{Remark} The same proof can be adapted to prove that  

$$\limsup_{t \to +\infty}Y^l(M\times N,[g+th])\leq l^{\frac{2}{m+n}}Y(M\times \re^n,[g+g_e]),$$
for  $l\geq 2$.

\end{Remark}

\begin{Corollary}\label{boundforY2attained} Let $(M^m,g)$ be a closed manifold $(m\geq 2)$ with positive scalar curvature and let $(N^n,h)$  be any closed manifold $(n\geq 2)$. Then, for $t$ large enough,  we have

$$Y^2(M\times N,[g+th])<[Y(M\times N,[g+th])^ {\frac{m+n}{2}}+Y(S^{m+n})^{\frac{m+n}{2}}]^{\frac{2}{m+n}}.$$

\end{Corollary}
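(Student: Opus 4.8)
The plan is to compare the two sides of the desired strict inequality as $t\to+\infty$ and to reduce everything to the strict bound (\ref{upperboundYMR}), which is precisely where the hypothesis $n\geq 2$ enters. Write $k=m+n$ and set $Y_\infty:=Y(M\times\re^n,[g+g_e])$; recall that $Y_\infty>0$ since $s_g>0$, and that $Y(M\times N,[g+th])>0$ for $t$ large. Denote by $A(t):=Y^2(M\times N,[g+th])$ the left-hand side of the inequality to be proved, and by $B(t):=[Y(M\times N,[g+th])^{k/2}+Y(S^{k})^{k/2}]^{2/k}$ its right-hand side.

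First I would compute both limits. By Theorem \ref{limitY2}, $\lim_{t\to+\infty}A(t)=2^{2/k}Y_\infty$. By Theorem \ref{thmAFP}, $Y(M\times N,[g+th])\to Y_\infty$, and since the map $x\mapsto[x^{k/2}+Y(S^{k})^{k/2}]^{2/k}$ is continuous (and $Y(M\times N,[g+th])$ is eventually positive, so the powers are unproblematic), we get $\lim_{t\to+\infty}B(t)=[Y_\infty^{k/2}+Y(S^{k})^{k/2}]^{2/k}$.

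Next I would check that the limit of $A$ is strictly smaller than the limit of $B$. Since $x\mapsto x^{2/k}$ is strictly increasing, the inequality $2^{2/k}Y_\infty<[Y_\infty^{k/2}+Y(S^{k})^{k/2}]^{2/k}$ is equivalent, after raising both sides to the power $k/2$, to $2\,Y_\infty^{k/2}<Y_\infty^{k/2}+Y(S^{k})^{k/2}$, i.e. to $Y_\infty<Y(S^{k})$. This is exactly inequality (\ref{upperboundYMR}), which holds because $m,n\geq 2$. Finally, since $\lim A(t)<\lim B(t)$ and both limits exist, taking for instance $\varepsilon=\tfrac13\bigl(\lim B-\lim A\bigr)>0$ yields a $t_0$ with $A(t)<\lim A+\varepsilon<\lim B-\varepsilon<B(t)$ for all $t\geq t_0$, which is the assertion.

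The only substantive ingredient is the strict inequality $Y_\infty<Y(S^{m+n})$; the rest is a routine continuity-and-limit comparison, so there is no real obstacle once the earlier theorems are in hand. It is worth emphasizing that the hypothesis $n\geq 2$ is essential here: for $n=1$ one has $Y(S^{m-1}\times\re,[g_0^{m-1}+g_e])=Y(S^{m})$, so $Y_\infty=Y(S^{k})$ and the strict inequality degenerates to an equality, consistent with the computation $\lim_{t\to+\infty}Y^2(S^{m-1}\times S^1,g_0^{m-1}+tg_0^1)=2^{2/m}Y(S^{m})$ noted above.
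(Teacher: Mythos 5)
Your proposal is correct and follows essentially the same route as the paper: both rest on the strict inequality (\ref{upperboundYMR}) giving $Y(M\times\re^n,[g+g_e])<Y(S^{m+n})$, combine it with the limits from Theorem \ref{limitY2} and Theorem \ref{thmAFP}, and conclude by comparing the two limits for $t$ large. Your explicit verification that the limit inequality reduces to $Y_\infty<Y(S^{k})$, and your remark on the necessity of $n\geq 2$, merely spell out steps the paper leaves implicit.
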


\begin{proof}  Since  $Y(M\times\re^n,[g+g_e])<Y(S^{m+n})$, it follows that

 $$2^{\frac{2}{m+n}}Y(M\times \re^n, [g+g_e])<[Y(M\times \re^n ,[g+g_e])^ {\frac{m+n}{2}}+Y(S^{m+n})^{\frac{m+n}{2}}]^{\frac{2}{m+n}}.$$

	On the other hand, we know by Theorem \ref{thmAFP} that   $\lim_{t\to+\infty}Y(M\times N,[g+th])=Y(M\times \re^n ,[g+g_e])$.  Thereby,  provided  $t$ large enough,   Theorem \ref{limitY2} implies the desired inequality.

\end{proof}

	Now, Corollary  \ref{nodalsolution} is an immediate  consequence of the corollary  above and  Theorem \ref{mainresultAH}. Hence, for $t$ large enough, we have a   sign changing solution $v\in C^{3,\alpha}(M\times N)$ of the equation 
	
	 $$L_{g+th}v=\lambda |v|^{p_{m+n}-2}v.$$  
	 We can choose $v$ such that $\lambda=Y^2(M\times N,[g+th])$.
  
		  Note that in general $(M\times N,g+th)$ is not locally conformally flat (eventually it is when $(M,g)$ and $(N,th)$ have constant sectional curvature $1$ and $-1$).  Therefore, when $m+n\geq 11$,   Corollary \ref{nodalsolution}  is a direct consequence of  Theorem \ref{mainresultAH}. 
		  
	Actually, as we mentioned in the Introduction,  the second $N$-Yamabe constant of a product $(M\times N,g+th)$ is attained (when  $s_g$ or $s_{g+h}$ is constant) by  a generalized $N-$metric, and this provides  a nodal solution of the Yamabe equation on $(M\times N,g+th)$ that only depends on $N$, i.e., a nodal solution of $$L_{g+h}(w)=Y^2_N(M\times N,g+h) |w|^{p_{m+n}-2}w.$$
 However,  in general, this solution is  not  the same solution that the one  provided by Corollary \ref{nodalsolution}. The reason is that   $Y^2(M\times N,[g+th])$,  generally, will be smaller than $Y^2_N(M\times N,g+th)$ (see  Remark \ref{YR>Y}).


\subsection{Second $N-$Yamabe constant.}\label{subsectionN-Yamabe}
$\ $

The second $N-$Yamabe constant  is always attained by a generalized metric.
It can be  proved, with the same argument used in \cite{Petean2},  that the $l$th  $N-$Yamabe constant is also attained by a generalized metric.
$\ $

\begin{Lemma}\label{lowerbound2Nyamabe} Let $(M,g)$  and $(N,h)$ be  closed Riemannian manifolds such that $s_g$ is constant and  $Y_N(M\times N,g+h)\geq 0$. Then,  

$$2^ {\frac{2}{m+n}}Y_N(M\times N,g+h)\leq Y^2_N(M\times N,g+h).$$

\end{Lemma}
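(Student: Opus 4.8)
The plan is to adapt the proof of the left-hand inequality in Theorem \ref{AH-bounds} to the $N$-restricted setting, with the $N$-Yamabe constant $Y_N:=Y_N(M\times N,g+h)$ playing the role of the Yamabe constant and functions depending only on $N$ replacing arbitrary functions. Since $s_g$ is constant, I may invoke the variational characterization of $Y^2_N$ from Subsection \ref{variationalcharact}: it suffices to bound, for every $u\in L^{p_{m+n}}_{\geq 0}(N)$ and every $V=\spn(v_1,v_2)\in Gr^2_u(H^2_1(N))$, the inner supremum from below by $2^{2/(m+n)}Y_N$, because $Y^2_N$ is the infimum of these suprema over $u$ and $V$. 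Throughout I abbreviate $p=p_{m+n}$, write $q(v)=\int_{M\times N}a_{m+n}|\nabla v|^2_{g+h}+s_{g+h}v^2\,dv_{g+h}$ and $b_u(v)=\int_{M\times N}u^{p-2}v^2\,dv_{g+h}$ for functions lifted from $N$, and set $A=\int_{M\times N}u^{p}\,dv_{g+h}$, so that the inner supremum equals $A^{2/(m+n)}\sup_{v\in V}q(v)/b_u(v)$. The hypothesis $Y_N\geq 0$ gives the basic bound $q(v)\geq Y_N\|v\|_{p}^{2}$ for every nonzero $N$-function $v$, directly from the definition of $Y_N$ (here $s_g$ constant, hence $s_{g+h}=s_g+s_h$, is what makes the lifted functions genuinely compute $Y_N$ on the product).

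Fix $u$ and $V$. The heart of the argument is to produce a sign-changing element $v\in V$ whose nodal decomposition balances the generalized volume. I would consider the projective family $v_\theta=\cos\theta\,v_1+\sin\theta\,v_2$ and the function $g(\theta)=\int_{\{v_\theta>0\}}u^{p}\,dv_{g+h}-\int_{\{v_\theta<0\}}u^{p}\,dv_{g+h}$; since $v_{\theta+\pi}=-v_\theta$ we have $g(\theta+\pi)=-g(\theta)$, so the intermediate value theorem yields a $\theta^\ast$ with $g(\theta^\ast)=0$. For $v:=v_{\theta^\ast}$ this means the two pieces $A_{+}:=\int_{\{v>0\}}u^p\,dv_{g+h}$ and $A_{-}:=\int_{\{v<0\}}u^p\,dv_{g+h}$ satisfy $A_{+}=A_{-}=A/2$.

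Writing $v=v^{+}-v^{-}$ with $v^{\pm}\geq 0$ of disjoint support, the identities $v^{+}v^{-}=0$ and $\nabla v^{+}\cdot\nabla v^{-}=0$ almost everywhere make the cross terms vanish, so $q(v)=q(v^{+})+q(v^{-})$ and $b_u(v)=b_u(v^{+})+b_u(v^{-})$. The $N$-Yamabe bound gives $q(v^{\pm})\geq Y_N\|v^{\pm}\|_p^2$, while Hölder's inequality on the support of each piece gives $b_u(v^{\pm})\leq A_{\pm}^{2/(m+n)}\|v^{\pm}\|_p^2=(A/2)^{2/(m+n)}\|v^{\pm}\|_p^2$, using $(p-2)/p=2/(m+n)$. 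Combining these,
\[
\frac{q(v)}{b_u(v)}\,A^{2/(m+n)}\geq\frac{Y_N\big(\|v^{+}\|_{p}^{2}+\|v^{-}\|_{p}^{2}\big)}{(A/2)^{2/(m+n)}\big(\|v^{+}\|_{p}^{2}+\|v^{-}\|_{p}^{2}\big)}\,A^{2/(m+n)}=2^{2/(m+n)}Y_N.
\]
Since $\sup_{v\in V}q(v)/b_u(v)\geq q(v)/b_u(v)$, taking the infimum over $u$ and $V$ gives $2^{2/(m+n)}Y_N\leq Y^2_N$.

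I expect the main obstacle to be the balancing step: justifying that $g$ is continuous (equivalently, that the nodal set $\{v_\theta=0\}$ carries no $u^{p}$-mass for the relevant $\theta$) and treating the degenerate possibility that some $v_\theta$ does not change sign, where one piece would be empty. I would handle these by first assuming $u$ strictly positive and $V$ in general position, arguing that $\{v_\theta=0\}$ is $u^{p}$-null for all but finitely many $\theta$ so that $g$ is continuous at one of its zeros, and then recovering the general case by approximation, exactly as in Ammann--Humbert. The remaining verifications---that $v^{+},v^{-}$ are admissible $N$-functions realizing the bound $q(v^{\pm})\geq Y_N\|v^{\pm}\|_p^2$, and the vanishing of the cross terms for $H^2_1$ functions---are routine.
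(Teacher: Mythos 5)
There is a genuine gap at the heart of your argument: the balancing step. Your function $g(\theta)=\int_{\{v_\theta>0\}}u^{p_{m+n}}dv_h-\int_{\{v_\theta<0\}}u^{p_{m+n}}dv_h$ need not be continuous, because the nodal set $\{v_\theta=0\}$ of a smooth function can carry positive $u^{p_{m+n}}$-mass, and then $g$ jumps and can skip the value $0$. Concretely: take three disjoint regions $F,P,M\subset N$, let $v_1$ be a bump function, positive on the interior of $F$ and zero elsewhere, let $v_2$ be smooth, supported in $P\cup M$, positive inside $P$ and negative inside $M$, and let $u$ be smooth and positive with $\int_F u^{p_{m+n}}dv_h=3c$, $\int_P u^{p_{m+n}}dv_h=2c$, $\int_M u^{p_{m+n}}dv_h=c$. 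Since $v_1$ and $v_2$ have disjoint supports, $\mathrm{sgn}(v_\theta)=\mathrm{sgn}(\cos\theta)$ on $F$ and $\pm\,\mathrm{sgn}(\sin\theta)$ on $P$, $M$, so $g(\theta)=3c\,\mathrm{sgn}(\cos\theta)+c\,\mathrm{sgn}(\sin\theta)$, which never vanishes: no element of $V=\spn(v_1,v_2)$ has a balanced nodal decomposition. This pair $(u,V)$ is perfectly admissible ($u$ smooth positive, $V\in Gr^2_u(H^2_1(N))$), and since $Y^2_N$ is an infimum over all admissible pairs you cannot discard it; your method simply fails on it (the desired lower bound on $\sup_{v\in V}$ does still hold for this $V$ --- e.g.\ via $v=v_1$, which concentrates on a set of $u^{p_{m+n}}$-mass at most $1/2$ --- but not by balancing). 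Your proposed repair is also circular and insufficient: ``continuity at one of its zeros'' presupposes the zero you are trying to produce, and in the example above $g$ is continuous except at four points of $[0,2\pi)$, so continuity off a small set does not restore the intermediate value theorem.

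The paper's proof of Lemma \ref{lowerbound2Nyamabe} (following Proposition 5.6 of \cite{Ammann-Humbert}) avoids balancing entirely, and this is not a technicality but the key idea you are missing. For $u\in C^{\infty}_{>0}(N)$, the infimum over $V$ of your inner supremum equals $\lambda_2^N(L_{u^{p_{m+n}-2}(g+h)})$ and is attained by the span of the first two eigenfunctions, so it suffices to bound $\lambda_2^N$ from below. The second eigenfunction $w$ changes sign, being $u^{p_{m+n}-2}$-orthogonal to the positive first eigenfunction, and --- this is what replaces your balancing --- each nodal part of $w$ separately satisfies an integrated eigenvalue identity: pairing $L_{g+h}(w)=\lambda_2^N u^{p_{m+n}-2}w$ with $\max(\pm w,0)$ and applying your same H\"older step gives (after normalizing $\|u\|_{p_{m+n}}=1$ and suppressing the volume factor) $Y_N\leq \lambda_2^N\big(\int_{\{\pm w>0\}}u^{p_{m+n}}dv_h\big)^{2/(m+n)}$, one inequality for each part with the \emph{same} $\lambda_2^N$, no matter how unevenly the mass splits. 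Summing the two inequalities and using the concavity bound $a^{\theta}+b^{\theta}\leq 2^{1-\theta}$ with $\theta=(p_{m+n}-2)/p_{m+n}$, $a+b\leq 1$, produces exactly the factor $2^{2/(m+n)}$. If you insist on keeping the balancing argument, you would have to restrict it to the eigenspace $V_0$, where unique continuation for $L_{g+h}$ guarantees every nonzero element has a null nodal set and $g$ is genuinely continuous; but once you are working with eigenfunctions, the identity above makes balancing superfluous, so the honest fix is to adopt the eigenfunction argument.
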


The argument  to prove the lemma  is similar to the one used to prove the first inequality in Theorem \ref{AH-bounds} (for the details see the proof of Proposition 5.6 in  \cite{Ammann-Humbert}). In this situation we only have to restrict to functions that depend only on the $N$ variable.  For convenience of the reader we briefly sketch the proof:

\begin{proof}
For $u\in L^{p_{m+n}}(N)$ and $v\in H^2_1(N)-\{0\}$, let us consider $$F_N(u,v)=\frac{\big(\int_{N} a_{m+n}|\nabla v|^2_{h}+s_{g+h}v^2dv_{h}\big)\big(\int_N u^{p_{m+n}}dv_h\big)^{\frac{2}{m+n}}vol(M,g)^{\frac{2}{m+n}}}{\int_{N}u^{p_{m+n}-2}v^2dv_{h}}.$$
The lemma will follow if we prove that for any $u\in C^{\infty}_{>0}(N)$, with $\|u\|_{p_{m+n}}=1$, and any $V\in Gr^2(C^{\infty}(N))$ we have
\begin{equation}\label{supremum}
 \sup_{v\in V-\{0\}}F_N(u,v)\geq 2^ {\frac{2}{m+n}}Y_N(M\times N,g+h).
\end{equation}

The operator $L_{u^{p_{m+n}-2}(g+h)}$ restricted to $H^2_1(N)$ has a discrete spectrum $$0<\lambda_1^N(L_{u^{p_{m+n}-2}(g+h)})\leq \lambda_2^N(L_{u^{p_{m+n}-2}(g+h)})\leq\dots$$

Let $w_1$ and $w_2$ be the first two eigenvectors associated with  $\lambda_1^N(L_{u^{p_{m+n}-2}(g+h)})$ and $\lambda_2^N(L_{u^{p_{m+n}-2}(g+h)})$, respectively. By the conformal invariance of the conformal Laplacian operator, $v_1=uw_1$ and $v_2=uw_2$ satisfy 

$$
 L_{g+h}(v_1)=\lambda_1^N(L_{u^{p_{m+n}-2}(g+h)})u^{p_{m+n}-2}v_1$$
and
$$
 L_{g+h}(v_2)=\lambda_2^N(L_{u^{p_{m+n}-2}(g+h)})u^{p_{m+n}-2}v_2.
$$

We can choose $w_1$ and $w_2$ such that    
$$
 \int_{N}u^{p_{m+n}}v_1v_2dv_{h}=0.$$

 Notice that by the maximum principle we can also choose $v_1>0$, then $v_2$ must change sign. 

The supreme (\ref{supremum}) in any subspace $V\in Gr^2(C^{\infty}(N))$ is  greater or equal  than $\sup_{v\in V_0-\{0\}}F_N(u,v)$ when  $V_0:=span(v_1,v_2)$. Actually, we have that  
$$\sup_{v\in V_0-\{0\}}F_N(u,v)=\lambda_2^N(L_{u^{p_{m+n}-2}(g+h)}).$$

Now, using the H\"{o}lder inequality and the definition of the $N-$Yamabe constant we get

$$2Y_N(M\times N,g+h)\leq \lambda_2^N(L_{u^{p_{m+n}}(g+h)})\Big[\Big(\int_{\{v_2\geq0\}} u^{p_{m+n}-2}dv_{h} \Big)^{\frac{p_{m+n}-2}{p_{m+n}}}$$
   $$+\Big(\int_{\{v_2<0\}} u^{p_{m+n}}dv_{h}\Big)^{\frac{p_{m+n}-2}{p_{m+n}}} \Big].$$

Applying again  the H\"{o}lder inequality, we obtain 
   
   $$\Big(\int_{\{v_2\geq 0\}} u^{p_{m+n}}dv_{h} \Big)^{\frac{p_{m+n}-2}{p_{m+n}}}
   +\Big(\int_{\{v_2< 0\}}u^{p_{m+n}}dv_{h}\Big)^{\frac{p_{m+n}-2}{p_{m+n}}} \leq 2^{\frac{2}{p_{m+n}}}.$$
   Therefore, $$2^{\frac{2}{m+n}}Y_N(M\times N,g+h)\leq\lambda_2^N(L_{u^{p_{m+n}-2}(g+h)}).$$


\end{proof}

\begin{proof}[Proof of Theorem \ref{limitY2N}]

 By the positiveness of the scalar curvature of  $(M,g)$, for any $t>0$ we have 
 
 $$0<Y(M\times N,[g+th])\leq Y_N(M\times N,g+th).$$
 
 Hence, by Lemma \ref{lowerbound2Nyamabe}  we have 

$$2^ {\frac{2}{m+n}}Y_N(M\times N,g+th)\leq Y^2_N(M\times N,g+th).$$   

From Theorem  \ref{thmAFP},   we obtain 

   $$ 2^ {\frac{2}{m+n}}Y_{\re^n}(M\times\re^n,g+g_e)\leq \liminf_{t \to +\infty}  Y^2_{N}(M\times N, g+th).$$

For  any $\varepsilon>0$,  we choose  
$f=f_{\varepsilon}\in C^{\infty}_{\geq 0, c}(\re^{n})$   that satisfies  

$$J(f) \leq Y_{\re^n}(M\times \re^n,g+g_e)+\varepsilon,$$

 then, it can be proved by  a similar argument  to the one used in the proof of Theorem \ref{limitY2} that
 $$\limsup_{t \to +\infty}  Y^2_{N}(M\times N, g+th)\leq 2^ {\frac{2}{m+n}}Y_{\re^n}(M\times\re^n,g+g_e).$$ 
This completes the proof.

\end{proof}

\begin{Remark} Let $(M^m,g)$ be a closed Riemannian manifold  $(m\geq 2)$ of constant  positive scalar curvature. Hence by Theorem \ref{thmAFP} we have that  $Y(M\times \re^n,[g+g_e])=Y_{\re^n}(M\times \re^n,g+g_e)$ if and only if 
$$\lim_{t \to +\infty}Y_N(M\times N,g+th)=\lim_{t \to +\infty}Y(M\times N,[g+th]).$$
for  any closed Riemannian manifold $(N,h)$.  By Theorem   \ref{limitY2} and Theorem \ref{limitY2N}, the equality $Y(M\times \re^n,[g+g_e])=Y_{\re^n}(M\times \re^n,g+g_e)$ is also equivalent to have 
$$\lim_{t \to +\infty}Y^2_N(M\times N,g+th)=\lim_{t \to +\infty}Y^2(M\times N,[g+th]),$$ 
for any closed   Riemannian manifold $(N,h)$.

\end{Remark}

\vspace{0.5 cm}

For $m$ and $n$  positive integers, the   $\alpha_{m,n}$  Gagliardo-Nirenberg constant is defined as

 $$\alpha_{m,n}:=\Big[\inf_{u\in H^2_1(\re^n)-\{0\} } \frac{(\int_{\re^n} |\nabla u|^2dv_{g_e})^{\frac{n}{m+n}}(\int_{\re^n}u^2dv_{g_e})^{\frac{m}{m+n}}}{(\int_{\re^n}|u|^{p_{m+n}}dv_{g_e})^{\frac{m+n-2}{m+n}} }\Big]^{-1}.$$

These constants are positive and can be computed numerically. In  \cite{A-F-P}, they were computed for some cases ($m+n\leq 9$, with $n,m \geq 2$).  Also it was proved in (\cite{A-F-P}, Theorem 1.4) that  for any  closed Riemmannian manifold $(M,g)$ of positive constant  scalar curvature  and with unit volume, it holds 
 
 \begin{equation}\label{estimationRnyamabe}
 Y_{\re^n}(M\times \re^n,g+g_e)=\frac{A_{m,n}s_g^{\frac{m}{m+n}}}{\alpha_{m,n}},
 \end{equation}
 where $A_{m,n}:=(a_{m+n})^{\frac{n}{m+n}}(m+n)m^{-\frac{m}{m+n}}n^{-\frac{n}{m+n}}$.

An immediate consequence of  (\ref{estimationRnyamabe}) is:

\begin{Corollary}\label{gagliardolimitY2N} Let $(M^m,g)$ be a closed manifold $(m\geq 2)$ of  constant positive  scalar curvature and $(N^n,h)$  any closed Riemannian manifold. Then, 

$$\lim_{t \to +\infty}Y^2_N(M\times N,g+th)=\frac{2^{\frac{2}{m+n}}A_{m,n}s_g^{\frac{m}{m+n}}vol(M,g)^{\frac{2}{m+n}}}{\alpha_{m,n}}.$$

\end{Corollary}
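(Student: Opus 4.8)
The plan is to read the limit directly off Theorem \ref{limitY2N} and then substitute the explicit value of $Y_{\re^n}(M\times\re^n,g+g_e)$ furnished by formula (\ref{estimationRnyamabe}). By Theorem \ref{limitY2N} we already know that $\lim_{t\to+\infty}Y^2_N(M\times N,g+th)=2^{\frac{2}{m+n}}Y_{\re^n}(M\times\re^n,g+g_e)$, so the whole problem reduces to expressing $Y_{\re^n}(M\times\re^n,g+g_e)$ in terms of the Gagliardo--Nirenberg constant $\alpha_{m,n}$. The only discrepancy to reconcile is that (\ref{estimationRnyamabe}) is stated under the normalization $vol(M,g)=1$, while the corollary allows an arbitrary volume; hence the substance of the argument is a scaling computation that produces the factor $vol(M,g)^{\frac{2}{m+n}}$.

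First I would record how $Y_{\re^n}(M\times\re^n,g+g_e)$ behaves under a homothety $g\mapsto cg$ of the closed factor, for $c>0$. Since the Yamabe functional is scale invariant, rescaling the whole product gives $Y_{\re^n}(M\times\re^n,g+g_e)=Y_{\re^n}(M\times\re^n,cg+cg_e)$. Moreover the map $x\mapsto\sqrt{c}\,x$ is an isometry of $(\re^n,cg_e)$ onto $(\re^n,g_e)$ acting only on the Euclidean factor, so $\mathrm{id}_M\times(\sqrt{c}\,\cdot)$ is an isometry of $(M\times\re^n,cg+cg_e)$ onto $(M\times\re^n,cg+g_e)$ that carries functions depending only on $\re^n$ to functions depending only on $\re^n$; consequently $Y_{\re^n}(M\times\re^n,cg+cg_e)=Y_{\re^n}(M\times\re^n,cg+g_e)$. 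Combining the two identities shows that $Y_{\re^n}(M\times\re^n,g+g_e)$ is unchanged when $g$ is replaced by $cg$.

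Next I would choose $c=vol(M,g)^{-\frac{2}{m}}$, so that $\tilde g:=cg$ has unit volume, and apply (\ref{estimationRnyamabe}) to $\tilde g$. Under the homothety the scalar curvature scales as $s_{\tilde g}=c^{-1}s_g=vol(M,g)^{\frac{2}{m}}s_g$, whence $s_{\tilde g}^{\frac{m}{m+n}}=vol(M,g)^{\frac{2}{m+n}}s_g^{\frac{m}{m+n}}$. Using the scaling step together with (\ref{estimationRnyamabe}) gives $Y_{\re^n}(M\times\re^n,g+g_e)=Y_{\re^n}(M\times\re^n,\tilde g+g_e)=\frac{A_{m,n}s_{\tilde g}^{\frac{m}{m+n}}}{\alpha_{m,n}}=\frac{A_{m,n}\,vol(M,g)^{\frac{2}{m+n}}s_g^{\frac{m}{m+n}}}{\alpha_{m,n}}$. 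Substituting this into the limit from Theorem \ref{limitY2N} yields the asserted formula.

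The computation is elementary once the normalization is handled, so I do not expect any serious obstacle; the only point requiring care is the scaling step, and in particular the observation that a homothety of the Euclidean factor is an isometry preserving the class of functions over which the $N$-Yamabe functional is optimized. This is precisely what lets one transport (\ref{estimationRnyamabe}), valid only at unit volume, to arbitrary volume while generating exactly the factor $vol(M,g)^{\frac{2}{m+n}}$.
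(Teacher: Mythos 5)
Your proof is correct and takes essentially the same route as the paper, which presents the corollary as an immediate consequence of Theorem \ref{limitY2N} combined with formula (\ref{estimationRnyamabe}). The only difference is that you make explicit the unit-volume normalization — the homothety $g\mapsto vol(M,g)^{-\frac{2}{m}}g$, the scale invariance of the $\re^n$-Yamabe constant, and the scaling $s_{\tilde g}=vol(M,g)^{\frac{2}{m}}s_g$ producing the factor $vol(M,g)^{\frac{2}{m+n}}$ — a step the paper leaves implicit.
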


\begin{Remark}\label{YR>Y} If $(W,G_s)=(M^m\times N^n,s^{-n}g+s^{m}h)$ where $(M,g)$ and $(N,h)$ are closed manifolds of constant   positive scalar curvature and unit volume, then $(W,G_s)$ has constant positive scalar curvature and unit volume too. Nevertheless, the scalar curvature of $(W,G_s)$ tends to infinity as $s$ goes to infinity. Therefore,  for $s$ large enough, from  $(\ref{estimationRnyamabe})$  we obtain that  
$ Y(S^{m+n+k})<Y_{\re^k}(W\times \re^k,G_s+g_e)$, hence  $Y(W\times \re^k,[G_s+g_e])< Y_{\re^k}(W\times \re^k,G_s+g_e)$.
This implies that,  for  any closed  $k-$dimensional manifold $(Z,w)$ and $t$  sufficiently large, we have

\begin{align*}
Y(W\times Z, [G_s+tw])&<Y_Z(W\times Z,G_s+tw),
\end{align*}

and 

 $$Y^2(W\times Z,[G_s+tw])<Y^2_Z(W\times Z,G_s+tw).$$

\end{Remark}


\subsection{Second  Yamabe and  second $N-$Yamabe constant on non-compact manifolds.}\label{lYnoncompact}

$\ $

Throughout this section,   $(W^k,G)$ will be a complete Riemannian manifold, not  necessary  compact, with $Y(W,[G])>0$. We define the $l$th Yamabe constant of $(W,G)$ as

$$Y^l(W,G):=\inf_{\substack{
            u\in L^{p_{k}}_{\geq 0, c
            }(W)\\
            V\in Gr_u^l(C^{\infty}_c(W))}}\sup_{v\in V-\{0\}}\frac{\int_W a_k|\nabla v|^2_G+s_Gv^2dv_G}{\int_Wu^{p_{k}-2}v^2dv_G}\big(\int_Wu^{p_k}dv_G\big)^{\frac{2}{k}}.$$

\begin{Proposition} For $l\geq 2$,  $0<Y(W,G)=Y^1(W,G)\leq Y^l(W,G)$.
\end{Proposition}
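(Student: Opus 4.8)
The plan is to prove the three assertions in turn. The positivity $0<Y(W,G)$ is immediate, since it is exactly the standing hypothesis $Y(W,[G])>0$ of this section; the substance lies in the identity $Y(W,G)=Y^1(W,G)$ and the monotonicity $Y^1(W,G)\le Y^l(W,G)$ for $l\ge 2$. Throughout I write $E(v):=\int_W a_k|\nabla v|^2_G+s_Gv^2\,dv_G$ for the numerator, and recall that $Y(W,G)$ denotes the Yamabe constant $Y(W,[G])=\inf_{v\in C^\infty_c(W)-\{0\}}E(v)\|v\|_{p_k}^{-2}$ of the noncompact section. First I would unwind the definition in the case $l=1$: for a one-dimensional subspace $V=\spn(v)$ the quotient in the definition of $Y^l$ is invariant under $v\mapsto cv$, so the supremum over $V-\{0\}$ equals its value at $v$, and
$$Y^1(W,G)=\inf_{u,v}\frac{E(v)}{\int_W u^{p_k-2}v^2\,dv_G}\Big(\int_W u^{p_k}\,dv_G\Big)^{\frac{2}{k}},$$
the infimum running over $u\in L^{p_k}_{\ge 0,c}(W)$ and $v\in C^\infty_c(W)-\{0\}$ with $u^{p_k-2}v\not\equiv 0$.

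For the inequality $Y^1(W,G)\le Y(W,G)$ I would simply test with $u=|v|$, which is admissible since it lies in $L^{p_k}_{\ge 0,c}(W)$ and $|v|^{p_k-2}v\not\equiv0$. Then $\int_W u^{p_k}\,dv_G$ and $\int_W u^{p_k-2}v^2\,dv_G$ both equal $\|v\|_{p_k}^{p_k}$, and since $p_k\cdot\frac{2-k}{k}=-2$ the quotient collapses exactly to $E(v)/\|v\|_{p_k}^2$; taking the infimum over $v$ gives the claim. For the reverse inequality I would fix any admissible pair $(u,v)$ and bound the denominator by H\"older's inequality with conjugate exponents $\frac{p_k}{p_k-2}$ and $\frac{p_k}{2}$, obtaining $\int_W u^{p_k-2}v^2\,dv_G\le\big(\int_W u^{p_k}\,dv_G\big)^{\frac{p_k-2}{p_k}}\|v\|_{p_k}^2$. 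The one genuine computation here is the elementary identity $\frac{p_k-2}{p_k}=\frac{2}{k}$, which turns this into $\int_W u^{p_k-2}v^2\,dv_G\le\big(\int_W u^{p_k}\,dv_G\big)^{\frac{2}{k}}\|v\|_{p_k}^2$. Substituting this lower bound for the denominator—and using that $Y(W,G)>0$ forces $E(v)>0$, so that the inequality is preserved—shows that the quotient is at least $E(v)/\|v\|_{p_k}^2\ge Y(W,G)$, and taking the infimum yields $Y^1(W,G)\ge Y(W,G)$.

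Finally, the monotonicity $Y^1(W,G)\le Y^l(W,G)$ is a routine min-max comparison. Given an admissible pair $(u,V)$ for the $l$-problem with $V=\spn(v_1,\dots,v_l)\in Gr^l_u(C^\infty_c(W))$, the defining property of $Gr^l_u$ forces the functions $u^{p_k-2}v_1,\dots,u^{p_k-2}v_l$ to be linearly independent, hence $u^{p_k-2}v\not\equiv0$ for every $v\in V-\{0\}$; in particular the pair $(u,\spn(v_1))$ is admissible for the $l=1$ problem. Since the supremum over $V-\{0\}$ dominates the value of the quotient at $v_1$, which is itself $\ge Y^1(W,G)$, every admissible pair of the $l$-problem has supremum $\ge Y^1(W,G)$; taking the infimum gives $Y^l(W,G)\ge Y^1(W,G)$. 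I expect no serious obstacle: the only point requiring care is to keep the admissibility conditions (compact support of $u$, $V\in Gr^l_u$, positivity of the denominators) consistent between the two infima, while the heart of the matter is the sharp H\"older step resting on the bookkeeping identity $\frac{p_k-2}{p_k}=\frac{2}{k}$.
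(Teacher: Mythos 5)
Your proof is correct and is essentially the paper's own argument: the lower bound rests on H\"older's inequality together with the exponent identity $\frac{p_k-2}{p_k}=\frac{2}{k}$, and the equality $Y^1(W,G)=Y(W,[G])$ comes from testing the $l=1$ problem with a conformal factor built from the test function itself. The only cosmetic differences are that you test each $v$ directly with $u=|v|$ where the paper runs a minimizing sequence $u_i\in C^{\infty}_{\geq 0,\ c}(W)$ and evaluates the quotient at the pair $(u_i,u_i)$, and that you isolate the monotonicity $Y^1(W,G)\le Y^l(W,G)$ as a separate min-max restriction step, whereas the paper obtains $Y(W,G)\le Y^l(W,G)$ for all $l\ge 1$ at once by applying H\"older to an arbitrary $v\in V-\{0\}$.
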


\begin{proof}  
To prove that $Y(W,G) \leq Y^l(W,G)$ for $l\geq 1$, it is sufficient to show   that  

\begin{equation}\label{YlY1}
Y(W,G)\leq \sup_{v\in V-\{0\}}\frac{\int_W a_k|\nabla v|^2_G+s_Gv^2dv_G}{\int_Wu^{p_{k}-2}v^2dv_G}
\end{equation}

 for any    $u\in L^{p_{k}}_{\geq 0, c}(W)$ with  $\|u\|_{p_k}=1$ and  $V\in Gr_u^l(C^{\infty}_c(W))$. 
 
  If $v\in V-\{0\}$, by the H\"{o}lder inequality, we have that 

$$0<\int_W u^{p_k-2}v^2dv_G\leq (\int_W v^{p_k}dv_G)^{\frac{2}{p_k}}.$$

Since $Y(W,[G])>0$,  we have that $\int_W a_k|\nabla v|^2_G+s_Gv^2dv_G>0$ for any  $v\in V-\{0\}$. Thereby, we obtain   

$$J(v)=\frac{\int_{W}a_k|\nabla v|^2_G+s_Gv^2dv_G}{(\int_{W}v^{p_k}dv_G)^{\frac{2}{p_{k}}}} \leq \frac{\int_{W}a_k|\nabla v|^2_G+s_Gv^2dv_G}{\int_{W}u^{p_{k}-2}v^2dv_G}.$$

Now, taking supreme on the right hand side of the last inequality  we get (\ref{YlY1}).

Let $u_i\in C^{\infty}_{\geq 0,\ c}(W)$  be a minimizing sequence of $Y(W,[G])$. We can assume that $\|u_i\|_{p_k}=1$.  Then,

\begin{align*}
Y(W,[G]) & \leq Y^1(W,G) \leq  \inf_{\substack{V\in Gr_{u_i}^1(C^{\infty}_c(W))\\v\in V-\{0\}}}\frac{\int_W a_k|\nabla v|^2_G+s_Gv^2dv_G}{\int_Wu^{p_{k}-2}_iv^2dv_G}\\
&\leq  \int_W a_k|\nabla u_i|^2_G+s_Gu_i^2dv_G=J(u_i)\underset{i\to +\infty}{\longrightarrow }Y(W,[G]),\\
\end{align*}

which finishes the proof.
\end{proof}

Let $(M^m,g)$ be a closed Riemannian manifold  of constant scalar curvature and let $(N^n,h)$ be a non-compact Riemannian manifold such that   $Y(M\times N,[g+h])>0$. Then, we define the $l$th $N-$Yamabe constant of $(M\times N,g+h)$ as

$$Y^l_N(M\times N,g+h):=\inf_{\substack{
            u\in L^{p_{m+n}}_{\geq 0, c
            }(N)\\
            V\in Gr_u^l(C^{\infty}_c(N))}}\sup_{v\in V-\{0\}}\frac{\int_N a_{m+n}|\nabla v|^2_h+s_{g+h}v^2dv_h}{\int_Nu^{p_{m+n}-2}v^2dv_h}$$
            
            $$\ \ \ \ \ \ \ \ \ \ \ \ \ \ \ \times\big(\int_Nu^{p_{m+n}}dv_G\big)^{\frac{2}{m+n}}\big(vol(M,g)\big)^{\frac{2}{m+n}}.$$

$\ $

\begin{proof} [Proof of Theorem \ref{non compact}]

We are going to prove  the statement  of the theorem  for the second Yamabe constant case.  
 The argument to show the assertion for the second $N-$Yamabe constant is similar. We only have to restrict to functions that depend only on $\re^n$. 
  
  The proof essentially follows along the lines as that of Theorem 4.1 in \cite{Ammann-Humbert}.  
  
  First we are going to show that 

$$Y^2(M\times\re^n,g+g_e)\leq2^{\frac{2}{m+n}}Y(M\times \re^n, [g+g_e]).$$  

Let  $\varepsilon>0$ and consider   $f=f_{\varepsilon}\in C^{\infty}_{\geq 0,\ c}(M\times \re^n)$ such that 

$$J(f)\leq Y(M\times \re^n,[g+g_e])+\varepsilon.$$ 

Assume that the support of $f$ is in $M\times B_R(0)$.  For $\tilde{R}>2R$,  we can choose $q_1$ and $q_2$ in $B_{\tilde{R}}(0)$ such that  $B_R(q_1)\cap B_R(q_2)=\emptyset$  and    $M\times B_R(q_1)\cup M\times B_R(q_2)\subset  M\times B_{\tilde{R}}(0)$. Consider the function $u:=v_1+v_2$  where $v_i(p,q)=f(p,q-q_i)$, and let $V_0:=\spn(v_1,v_2)\in Gr_u^2(C^{\infty}_{ c}(M\times \re^n))$. Then, 

\begin{align*}
Y^2(M\times \re^n,g+g_e)& \leq  \sup_{v\in V_0-\{0\}}\frac{\int_{M\times B_{\tilde{R}}(0)} a_{m+n}|\nabla v|^2_{g+g_e}+s_{g}v^2dv_{g+g_e}}{\int_{M\times  B_{\tilde{R}}(0)}u^{p_{m+n}-2}v^2dv_{g+g_e}}\\& \ \ \ \ \ \ \ \ \ \times\big(\int_{M\times B_{\tilde{R}}(0)}u^{p_{m+n}}dv_{g+g_e}\big)^{\frac{2}{m+n}}\\ & \leq 2^{\frac{2}{m+n}}J(f)\leq 2^{\frac{2}{m+n}}\Big(Y(M\times \re^n,[g+g_e])+\varepsilon\Big).
\end{align*}

Letting  $\varepsilon$ go to $0$, we obtain the desired inequality.

Let  $u\in L^{p_{m+n}}_{\geq 0, c}(M\times \re^n)$ and $V\in Gr_{u}^2(C^{\infty}_c(M\times \re^n))$. Let us consider $F(u,V)$ given by 
$$F(u,V):=\sup_{v\in V-\{0\}}\frac{\int_{M\times \re^n} a_{m+n}|\nabla v|^2_{g+g_e}+s_{g}v^2dv_{g+g_e}}{\int_{M\times \re^n}u^{p_{m+n}-2}v^2dv_{g+g_e}}\big(\int_{M\times \re^n}u^{p_{m+n}}dv_{g+g_e}\big)^{\frac{2}{m+n}}.$$

Since $H: L^{p_{m+n}}_{\geq 0, c}(M\times \re^n) \times C^{\infty}_c(M\times \re^n)-\{0\}\longrightarrow \re$ defined by 
$$H(u,v):=\frac{\int_{M\times \re^n} a_{m+n}|\nabla v|^2_{g+g_e}+s_{g}v^2dv_{g+g_e}}{\int_{M\times \re^n}u^{p_{m+n}-2}v^2dv_{g+g_e}}\big(\int_{M\times \re^n}u^{p_{m+n}}dv_{g+g_e}\big)^{\frac{2}{m+n}}.$$
is continous, then $F$ depends continuously on $u$ and $V$.

Let $u\in C^{\infty}_c(M\times \re^n)$ be a non-negative function with support   included in $M\times B_R(0)$.    We claim that for any $V\in Gr_u^{2}(C^{\infty}_c(M\times B_R(0)))$, 

$$F(u,V)\geq 2^{\frac{2}{m+n}}Y(M\times \re^n,[g+g_e]).$$  
Without loss of generality we can assume that $\|u\|_{p_{m+n}}=1$. 
Let $k$ be a positive integer, we define

$$u_k(p,q):= 
 \frac{u(p,q)+\frac{1}{k}\chi_{(M\times \overline{B_R(0)})}(p,q)}{\|u+\frac{1}{k}\chi_{(M\times \overline{B_R(0)})}\|_{p_{m+n}}} $$

where $\chi_{(M\times \overline{B_R(0)})}$ is the characteristic function of  $M\times \overline{B_R(0)}$.

We are going to proceed  in a similar manner to the proof of Lemma \ref{lowerbound2Nyamabe}.  
Let us consider the operator  $P_k:C^{\infty}_c(M\times B_R(0))\longrightarrow C^{\infty}_c (M\times B_R(0))$ defined by 

$$P_k(v):=a_{m+n}u_k^{\frac{2-p_{m+n}}{2}}\Delta_{g+g_e}(u_k^{\frac{2-p_{m+n}}{2}}v)+s_g u_k^{(2-p_{m+n})}v.$$

 If $\lambda_1^k\leq \lambda_2^k$ are the first two eigenvalues of the Dirichlet problem for $P_k$ on $M\times \overline{B_R(0)}$,  and  $v_1^k$ and $v_2^k$ their respective associated eigenvectors, then 
  $u_k^{-\frac{p_{m+n}}{2}}v_1^k$  and  $u_k^{-\frac{p_{m+n}}{2}}v_2^k$ are eigenvectors of  the  conformal Laplacian  $L_{u_k^{p_{m+n}-2}(g+g_e)}$ with eigenvalues $\lambda^k_1$ and  $\lambda^k_2$, respectively.  We can choose  $v_1^k$ and  $v_2^k$ such that for $w_1:=u_k^{\frac{2-p_{m+n}}{2}}v^k_1$ and $w_2:=u_k^{\frac{2-p_{m+n}}{2}}v^k_2$   we have
 
 \begin{equation}\label{z1}
 L_{g+g_e}(w_1)=\lambda_1^ku_k^{p_{m+n}-2}w_1,\\
 \end{equation}
 
 \begin{equation}\label{z2}
 L_{g+g_e}(w_2)=\lambda_2^ku_k^{p_{m+n}-2}w_2,
\end{equation}
 and  

 \begin{equation}\label{z12}
 \int_{M\times\re^n}u_k^{p_{m+n}-2}w_1w_2dv_{g+g_e}=0.
 \end{equation}
 
 By the maximum principle, $w_1$ has no zeros in $M\times B_R(0)$. Hence, we can assume that $w_1>0$ in $M\times B_R(0)$.   Therefore, by equation (\ref{z12}),   $w_2$ must change sign in $M\times  B_R(0)$. Let, $z_1:=a \max(0,w_2)$ and $z_2:=b\max(0,-w_2)$. We choose   $a$, $b$ $\in \re_{>0}$    such that 
  
  \begin{equation}\label{ll}\int_{M\times \re^n} u_k^{p_{m+n}-2}z_l^2dv_{g+g_e}=1,\end{equation} 
 for $l=1,2$.

Then, by the H\"older inequality,  we have

$$2= \int_{M\times \re^n} u_k^{p_{m+n}-2}z_1^2dv_{g+g_e} +\int_{M\times \re^n} u_k^{p_{m+n}-2}z_2^2dv_{g+g_e}$$

$$\leq  (\int_{\{w_2\geq 0\}} u_k^{p_{m+n}}dv_{g+g_e} )^{\frac{p_{m+n}-2}{p_{m+n}}}(\int_{M\times B_R(0)} z_1^{p_{m+n}}dv_{g+g_e})^{\frac{2}{p_{m+n}}} $$

$$  +(\int_{\{w_2<0\}} u_k^{p_{m+n}}dv_{g+g_e}   )^{\frac{p_{m+n}-2}{p_{m+n}}}(\int_{M\times B_R(0)} z_2^{p_{m+n}}dv_{g+g_e})^{\frac{2}{p_{m+n}}} .$$

By the definition of the Yamabe constant, we obtain

$\ $

 $$2Y(M\times B_R(0),[g+g_e]) 
\leq\Big[\Big(\int_{\{w_2\geq 0\}} u_k^{p_{m+n}}dv_{g+g_e} \Big)^{\frac{p_{m+n}-2}{p_{m+n}}}$$
$$\ \ \ \ \ \ \times\Big(\int_{M\times B_R(0)}L_{g+g_e}(z_1)z_1dv_{g+g_e}\Big)$$
 
 $$+\Big(\int_{\{w_2<0\}} u_k^{p_{m+n}}dv_{g+g_e}\Big)^{\frac{p_{m+n}-2}{p_{m+n}}}\Big(\int_{M\times B_R(0)}L_{g+g_e}(z_2)z_2dv_{g+g_e}\Big)\Big].$$
 
 From equations (\ref{z1}), (\ref{z2}), and (\ref{ll}) we get that
 
   $$2Y(M\times B_R(0),[g+g_e]) \leq \lambda^k_2\Big[\Big(\int_{\{w_2\geq 0\}} u_k^{p_{m+n}}dv_{g+g_e} \Big)^{\frac{p_{m+n}-2}{p_{m+n}}}$$
   
   $$+\Big(\int_{\{w_2<0\}} u_k^{p_{m+n}}dv_{g+g_e}\Big)^{\frac{p_{m+n}-2}{p_{m+n}}} \Big].$$
 
 Then, applying  again the H\"older inequality, we have that 
 
 $$ 2Y(M\times B_R(0),[g+g_e] )\leq \lambda_2^k2^{\frac{2}{p_{m+n}}}.$$
 
 Therefore, $$2^{\frac{2}{m+n}}Y(M\times B_R(0),[g+g_e])\leq\lambda_2^k.$$
 
 Since  $\lambda_2^k=\inf_{V\in Gr_{u_k}^{2}(C^{\infty}_c(M\times B_R(0)))} F(u_k,V)$, we have proved the claim for $u_k$. By the continuity of $F$ with respect to the first variable,   letting $k$ go to infinity we obtain that 
 
 $$F(u,V)\geq  2^{\frac{2}{m+n}}Y(M\times B_R(0),[g+g_e])\geq   2^{\frac{2}{m+n}}Y(M\times \re^n,[g+g_e])$$ for any $V\in Gr_u^{2}(C^{\infty}_c(M\times B_R(0)))$.
 
  Therefore, for any $u\in C^{\infty}_c(M\times \re^n)$ and $V\in  Gr_{u}^{2}(C^{\infty}_c(M\times \re^n))$, we can choose $R$ sufficiently large such that $u\in C^{\infty}_c(M\times B_R(0)) $ and $V\in  Gr_{u}^{2}(C^{\infty}_c(M\times B_R(0)))$, and then we apply the claim.  
  
 Thereby, we obtain that 
  
  $$Y^2(M\times\re^n,g+g_e)\geq 2^{\frac{2}{m+n}}Y(M\times\re^n,[g+g_e]).$$
   
 $\ $

 \end{proof}

 As a consequence of  Theorem \ref{non compact},  we can rewrite the statements of   Theorem \ref{limitY2}  and Theorem \ref{limitY2N} as follows:

 \begin{Theorem}
 Let $(M^m,g)$ be a closed manifold $(m\geq 2)$ with positive scalar curvature and let $(N^n,h)$  be any  closed manifold. Then,   
 
 $$\lim_{t \to +\infty}Y^2(M\times N,[g+th])=Y^2(M\times\re^n,g+g_e).$$
 
If in addition $s_g$ is constant, then

 $$\lim_{t \to +\infty}Y^2_N(M\times N,g+th)=Y_{\re^n}^2(M\times \re^n,g+g_e).$$

\end{Theorem}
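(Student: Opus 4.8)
The plan is simply to combine the limit formulas already established in Theorem \ref{limitY2} and Theorem \ref{limitY2N} with the identifications of the non-compact second Yamabe constants provided by Theorem \ref{non compact}; no new analysis is required, so this is a purely formal corollary.

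First I would recall that Theorem \ref{limitY2} gives
$$\lim_{t \to +\infty}Y^2(M\times N,[g+th])=2^{\frac{2}{m+n}}Y(M\times \re^n, [g+g_e]).$$
On the other hand, the first assertion of Theorem \ref{non compact} states that
$$Y^2(M\times\re^n,g+g_e)=2^{\frac{2}{m+n}}Y(M\times \re^n, [g+g_e]).$$
Substituting the right-hand side of the latter identity into the former immediately yields
$$\lim_{t \to +\infty}Y^2(M\times N,[g+th])=Y^2(M\times\re^n,g+g_e),$$
which is the first claimed equality.

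For the second statement, under the additional hypothesis that $s_g$ is constant, I would argue in exactly the same way. Theorem \ref{limitY2N} provides
$$\lim_{t \to +\infty}Y^2_N(M\times N,g+th)=2^{\frac{2}{m+n}}Y_{\re^n}(M\times \re^n, g+g_e),$$
while the second assertion of Theorem \ref{non compact}, valid precisely under the constant scalar curvature hypothesis, gives
$$Y^2_{\re^n}(M\times\re^n,g+g_e)=2^{\frac{2}{m+n}}Y_{\re^n}(M\times \re^n, g+g_e).$$
Combining these two identities cancels the common factor $2^{\frac{2}{m+n}}$ and produces the desired equality.

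Since both parts are obtained by direct substitution, there is no substantive obstacle to overcome. The only point requiring care is the bookkeeping of hypotheses: the constant scalar curvature assumption on $(M,g)$ is genuinely needed for the second part, as it is invoked both in Theorem \ref{limitY2N} and in the second half of Theorem \ref{non compact}, whereas the first part requires only the positivity of $s_g$. I would therefore state the two parts separately, as in the theorem, to keep the dependence on these assumptions transparent.
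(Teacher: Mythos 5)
Your proposal is correct and is exactly the paper's own argument: the paper presents this theorem as a direct rewriting of Theorem \ref{limitY2} and Theorem \ref{limitY2N} using the identities of Theorem \ref{non compact}, precisely the substitutions you perform. Your bookkeeping of the hypotheses (positivity of $s_g$ for the first part, constancy in addition for the second) also matches the paper's statements, so nothing is missing.
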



\section{Second Yamabe and  second $N-$Yamabe invariant}\label{section2inv}

Throughout this section  $W^k$ will  be a closed manifold of dimension $k$.

 If $Y(W,[G])\geq 0$, then $Y(W,[G])=Y^1(W,[G])$. Therefore, we have  that $Y(W)=Y^1(W)$ if $W$ admits a metric of constant scalar curvature equal to zero. Recall that if  $Y(W)>0$, then $W$ admits such  metrics (cf. \cite{K-W}).
 By (\cite{Ammann-Humbert}, Proposition 8.1), we know that if $Y^l(W,[G])<0$, then $Y^l(W,[G])=-\infty$. 
Hence, if  $Y(W)<0$ or $Y(W)=0$ and the Yamabe invariant is not   attained, then the first Yamabe invariant of $W$ must be  $-\infty$.

Note that  the infimum of the $l$th Yamabe constant   over the space of Riemannian metrics of $W$ is always $-\infty$. Indeed, for every positive integer $l$, we can  find a metric $G$  such that the first $l$ eigenvalues of $L_G$ are negative (cf.  \cite{ElSayed}, Proposition 3.2), which    implies that $Y^1(M,[G])=\cdots=Y^l(M,[G])=-\infty$.

\subsection{Second Yamabe Invariant}

 \begin{Proposition}  $Y^2(W)>-\infty$ if and only if $Y^2(W)\geq 0$.
 \end{Proposition}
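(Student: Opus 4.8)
The plan is to reduce the entire statement to the per-conformal-class dichotomy already recorded just above, so that no new analysis is needed. The implication $Y^2(W)\geq 0 \Rightarrow Y^2(W)>-\infty$ is immediate, since $0>-\infty$; hence all the content lies in the converse, and I would treat that direction by showing that $Y^2(W)$ can never be a finite negative number, i.e.\ that $Y^2(W)\in\{-\infty\}\cup[0,+\infty)$.

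The key input is the fact, cited above from (\cite{Ammann-Humbert}, Proposition 8.1), that for \emph{every} Riemannian metric $G$ on $W$ the second Yamabe constant $Y^2(W,[G])$ is either $\geq 0$ or equal to $-\infty$, with no finite negative value possible. Thus each term in the supremum defining $Y^2(W)=\sup_{G\in\mathcal{M}_W}Y^2(W,[G])$ already lies in $\{-\infty\}\cup[0,+\infty)$, and the remaining task is simply to check that taking the supremum preserves this property.

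To carry this out I would split into two cases. First, if there exists at least one metric $G$ with $Y^2(W,[G])\geq 0$, then $Y^2(W)\geq Y^2(W,[G])\geq 0$. Otherwise every conformal class satisfies $Y^2(W,[G])=-\infty$, and the supremum of a family all of whose members equal $-\infty$ is again $-\infty$, so $Y^2(W)=-\infty$. In either case $Y^2(W)\in\{-\infty\}\cup[0,+\infty)$, and therefore the hypothesis $Y^2(W)>-\infty$ forces $Y^2(W)\geq 0$, which is exactly the converse implication.

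I do not expect a genuine obstacle here: the substantive work, namely excluding finite negative values within a fixed conformal class, is entirely contained in the cited Ammann--Humbert result, and the final step is merely the observation that a supremum of a set contained in $\{-\infty\}\cup[0,+\infty)$ again lies in that set. The only point requiring any care is to note that the family is nonempty, since every closed manifold carries a Riemannian metric, so that $Y^2(W)$ is well defined and the two-case analysis is exhaustive.
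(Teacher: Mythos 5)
Your proposal is correct and follows essentially the same route as the paper: both arguments rest on the cited Ammann--Humbert fact that $Y^2(W,[G])<0$ forces $Y^2(W,[G])=-\infty$ for each conformal class, combined with elementary properties of the supremum. The paper phrases this as a contrapositive (if $Y^2(W)<0$ then every $Y^2(W,[G])$ is negative, hence $-\infty$, hence $Y^2(W)=-\infty$), while you phrase it as a case split, but the content is identical.
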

 
 \begin{proof} Suppose that $Y^2(W)<0$. Then, the second Yamabe constant of any metric  $G$   is negative, which  implies that   $Y^2(W,[G])=-\infty$. Therefore,  $Y^2(W)=-\infty$. 
 \end{proof}

 \begin{Lemma}\label{samesigneigenvalue}  Let $[G]$ be a conformal class of $W$ and let $\tilde{G}\in[G]$. Then,  $\lambda_l(L_G)$ and $\lambda_l(L_{\tilde{G}})$  have the same sign.
\end{Lemma}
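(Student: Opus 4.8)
The plan is to derive everything from two facts already recorded in Subsection~\ref{variationalcharact}: the conformal covariance of the conformal Laplacian and the min-max description of its eigenvalues. Since $\tilde{G}\in[G]$ is a genuine metric, I write $\tilde{G}=u^{p_k-2}G$ with $u\in C^{\infty}_{>0}(W)$ (possible because $p_k-2=4/(k-2)\neq 0$), and use the covariance identity $L_{\tilde{G}}(v)=u^{1-p_k}L_G(uv)$ together with $dv_{\tilde{G}}=u^{p_k}dv_G$. Abbreviating $Q_G(w):=\int_W a_k|\nabla w|^2_G+s_Gw^2\,dv_G=\int_W L_G(w)\,w\,dv_G$, the first step is to insert these two relations into the Rayleigh quotient appearing in the min-max formula for $\lambda_l(L_{\tilde{G}})$.

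A direct substitution (no estimates needed) gives, for $v\in H^2_1(W)$, the identities $\int_W L_{\tilde{G}}(v)\,v\,dv_{\tilde{G}}=Q_G(uv)$ and $\int_W v^2\,dv_{\tilde{G}}=\int_W u^{p_k-2}(uv)^2\,dv_G$. Setting $w=uv$ and using that multiplication by the positive smooth function $u$ is a topological isomorphism of $H^2_1(W)$, so that $V\mapsto uV$ is a bijection of $Gr^l(H^2_1(W))$ onto itself, the min-max formula becomes
\[
\lambda_l(L_{\tilde{G}})=\inf_{V\in Gr^l(H^2_1(W))}\ \sup_{w\in V-\{0\}}\frac{Q_G(w)}{\int_W u^{p_k-2}w^2\,dv_G},
\]
which is to be compared with the analogous expression for $\lambda_l(L_G)$: it has the \emph{identical} numerator $Q_G(w)$ but carries the weight $1$ in place of $u^{p_k-2}$ in the denominator.

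The remaining step is a sign comparison. Since $u^{p_k-2}>0$, both denominators define positive-definite quadratic forms in $w$, so for every fixed $w\neq 0$ the sign of each Rayleigh quotient equals the sign of $Q_G(w)$, independently of the weight used. Hence, for a fixed subspace $V$, the inner supremum is negative (resp.\ positive) for one weight exactly when it is for the other. Passing to the infimum over $V$ transfers the sign: if $\lambda_l(L_G)<0$ there is an $l$-dimensional $V$ on which $Q_G$ is negative definite, and the same $V$ forces $\lambda_l(L_{\tilde{G}})<0$, and symmetrically. I expect the only genuinely delicate point to be the borderline separating $\lambda_l>0$ from $\lambda_l=0$: here one must use that both infima are actually attained by the span of the first $l$ eigenfunctions, so that $\lambda_l(L_G)>0$ makes every admissible supremum strictly positive and hence $\lambda_l(L_{\tilde{G}})>0$ as well. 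Granting this attainment, the equality case $\lambda_l=0$ follows by elimination, and the three cases together give that $\lambda_l(L_G)$ and $\lambda_l(L_{\tilde{G}})$ share the same sign.
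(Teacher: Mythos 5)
Your proof is correct and takes essentially the same approach as the paper's: both use the conformal covariance of $L_G$ to pass to the weighted Rayleigh--quotient characterization with the common numerator $Q_G(w)$ and weight $u^{p_k-2}$, observe that a positive weight cannot change the sign of any quotient, and invoke attainment of the min-max infimum by the eigenspace to transfer the sign across the infimum. The paper merely packages this as a proof by contradiction (assuming $\lambda_l(L_G)>0$, $\lambda_l(L_{\tilde G})\leq 0$, and plugging the realizing subspace $V_0$ of $\lambda_l(L_{\tilde G})$ into the unweighted quotient), whereas you argue the trichotomy of signs directly.
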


\begin{proof}  Let $u\in C^{\infty}_{>0}(W)$ such that  $\tilde{G}=u^{p-2}G$. Assume  that $\lambda_l(L_G)>0$ and $\lambda_l(L_{\tilde{G}})\leq 0$. Let $V_0\in Gr^l(H^2_1(W))$  that realizes $\lambda_l(L_{\tilde{G}})$. Then,   

$$\sup_{v\in V_0-\{0\}}\frac{\int_WvL_G(v)dv_G}{\int_Wu^{p_k-2}v^2dv_G}=\lambda_l(L_{\tilde{G}})\leq0,$$   

which implies that  $\int_WvL_g(v)dv_G\leq0$ for any   $v\in V_0-\{0\}$. Therefore, we obtain  

$$0<\lambda_l(L_G)\leq \sup_{v\in V_0-\{0\}}\frac{\int_WvL_G(v)dv_G}{\int_Wv^2dv_G}\leq 0,$$
which is a contradiction. Hence,    $\lambda_l(L_{\tilde{G}})> 0$. 

Now, assume that  $\lambda_l(L_G)=0$. Is easy to see that $\lambda_l(L_{\tilde{G}})$ can not be negative. If   $\lambda_l(L_{\tilde{G}})>0$, then we are in the same situation as above.  Exchanging $G$ by $\tilde{G}$, we get that $\lambda_l(L_G)>0$, which is again a contradiction. Thus, $\lambda_l(L_{\tilde{G}})=0$.

\end{proof}

\begin{Lemma}\label{sign2eigenvalue} $Y^2(W)=-\infty$ if and only if the second eigenvalue of the conformal Laplacian is negative for all the Riemannian metrics on $W$.
\end{Lemma}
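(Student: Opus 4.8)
The plan is to prove the two implications separately, the key tools being the sign-preservation Lemma \ref{samesigneigenvalue} together with the dichotomy $Y^l(W,[G])<0 \Rightarrow Y^l(W,[G])=-\infty$ recalled above from \cite{Ammann-Humbert}.

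For the implication ($\Leftarrow$), I would suppose $\lambda_2(L_G)<0$ for every metric $G$ on $W$. Fixing such a $G$ and using $G$ itself as a representative of its conformal class in the infimum defining the second Yamabe constant, one obtains
$$Y^2(W,[G])\le \lambda_2(L_G)\,vol(W,G)^{\frac{2}{k}}<0,$$
since the volume factor is positive. By the dichotomy, this forces $Y^2(W,[G])=-\infty$. As this holds for every $G$, taking the supremum over all metrics gives $Y^2(W)=-\infty$. Note that this direction does not even require Lemma \ref{samesigneigenvalue}.

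For the implication ($\Rightarrow$) I would argue by contraposition: assume there is a metric $G_0$ with $\lambda_2(L_{G_0})\ge 0$, and show $Y^2(W)>-\infty$. By Lemma \ref{samesigneigenvalue}, the sign of $\lambda_2$ is a conformal invariant, so $\lambda_2(L_{\tilde{G}})\ge 0$ for every $\tilde{G}\in[G_0]$. Hence every term $\lambda_2(L_{\tilde{G}})\,vol(W,\tilde{G})^{\frac{2}{k}}$ appearing in the infimum is nonnegative, whence $Y^2(W,[G_0])\ge 0$. Taking the supremum over all conformal classes yields $Y^2(W)\ge Y^2(W,[G_0])\ge 0>-\infty$, which is precisely the contrapositive of the desired statement.

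I do not expect a serious obstacle here: the argument is purely a matter of correctly combining the conformal invariance of the sign of $\lambda_2$ with the negativity dichotomy. The only point requiring care is that in ($\Rightarrow$) one must control the sign of $\lambda_2$ for the \emph{entire} conformal class rather than a single representative, and this is exactly what Lemma \ref{samesigneigenvalue} supplies.
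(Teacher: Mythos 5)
Your proof is correct and is essentially the paper's own argument: both directions rest on the dichotomy $Y^2(W,[G])<0\Rightarrow Y^2(W,[G])=-\infty$ and on Lemma \ref{samesigneigenvalue}, the only cosmetic difference being that you phrase the forward implication contrapositively (from one metric with $\lambda_2\ge 0$ you deduce $Y^2(W,[G_0])\ge 0$), whereas the paper argues directly (from $Y^2(W,[G])=-\infty$ it extracts a metric in $[G]$ with negative $\lambda_2$ and transfers the sign back to $G$).
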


\begin{proof}
If for any metric  $\lambda_2(L_G)<0$, then $Y^2(W,[G])=-\infty$. Thus, if this is fulfilled for all the metrics on $W$,  then   $Y^2(W)=-\infty$.  

Now assume that   $Y^2(W)=-\infty$. Therefore, for any metric $G$  we have  

$$Y^2(W,G)=\inf_{h\in [G]}\lambda_l(L_h)vol(W,h)^{\frac{2}{k}}=-\infty.$$ 

Hence,  there exists a metric $\tilde{G}$ in the conformal class  $[G]$ with $\lambda_2(L_{\tilde{G}})<0$. By Lemma \ref{samesigneigenvalue},  $\lambda_2(L_G)$ must be negative. 
\end{proof}

\begin{Proposition} If  $Y^2(W)=-\infty$, then  $Y(W)\leq 0$.
 \end{Proposition}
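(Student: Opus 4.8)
The plan is to prove the contrapositive: assuming $Y(W)>0$, I will show $Y^2(W)>-\infty$. By Lemma \ref{sign2eigenvalue}, the condition $Y^2(W)=-\infty$ is equivalent to $\lambda_2(L_G)<0$ for \emph{every} Riemannian metric $G$ on $W$. Hence it suffices to exhibit a single metric $G$ for which $\lambda_2(L_G)\geq 0$, and the whole argument reduces to producing one good metric.

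First I would invoke the characterization of the Yamabe invariant recalled in the Introduction: since $Y(W)>0$, the manifold $W$ admits a Riemannian metric $G$ of positive scalar curvature, i.e.\ $s_G>0$ everywhere. For such a metric the quadratic form associated with the conformal Laplacian satisfies
$$\int_W a_k|\nabla v|^2_G+s_Gv^2\,dv_G>0$$
for every $v\in H^2_1(W)-\{0\}$, because both terms in the integrand are nonnegative and the strictly positive potential $s_G$ already forces the integral to be positive whenever $v\not\equiv 0$. Thus $L_G$ is a positive definite operator and all of its eigenvalues are strictly positive; in particular $\lambda_2(L_G)>0$.

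Consequently the second eigenvalue of the conformal Laplacian is not negative for this metric $G$, so by Lemma \ref{sign2eigenvalue} we cannot have $Y^2(W)=-\infty$; that is, $Y^2(W)>-\infty$. Taking the contrapositive of the implication $\bigl(Y(W)>0\ \Rightarrow\ Y^2(W)>-\infty\bigr)$ yields exactly the claim: $Y^2(W)=-\infty$ forces $Y(W)\leq 0$.

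I do not expect a serious obstacle. The only point requiring care is the passage from $Y(W)>0$ to the existence of a positive scalar curvature metric, which is the standard characterization of the Yamabe invariant (precisely the property recorded in the Introduction), combined with the elementary observation that a pointwise positive potential makes the conformal Laplacian positive definite, so that \emph{all} its eigenvalues, and in particular $\lambda_2$, are positive.
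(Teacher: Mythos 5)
Your proof is correct, but it runs in the opposite direction from the paper's and leans on different ingredients. The paper argues directly: if $Y^2(W)=-\infty$, then Lemma \ref{sign2eigenvalue} gives $\lambda_2(L_G)<0$ for \emph{every} metric $G$; since $\lambda_1(L_G)<\lambda_2(L_G)$, the first eigenvalue is negative for every $G$, and plugging the first eigenfunction into the Yamabe functional shows $Y(W,[G])<0$ for every conformal class, whence $Y(W)\leq 0$. You instead prove the contrapositive: from $Y(W)>0$ you extract a metric of positive scalar curvature, observe that the strictly positive potential makes the quadratic form of $L_G$ positive definite (so $\lambda_2(L_G)>0$, the eigenvalues being attained on a closed manifold), and conclude $Y^2(W)>-\infty$ via the same Lemma \ref{sign2eigenvalue}. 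The trade-off: your route invokes the deeper fact that a positive Yamabe invariant yields a positive scalar curvature metric (which rests on the solution of the Yamabe problem, or at least on a conformal deformation by the first eigenfunction), while the paper's route needs only the elementary eigenvalue ordering $\lambda_1<\lambda_2$ and the sign link between $\lambda_1(L_G)$ and $Y(W,[G])$; in exchange, your argument isolates a single good metric rather than controlling all conformal classes, and the paper's direct version actually establishes the slightly stronger statement that $Y(W,[G])<0$ for every conformal class $[G]$.
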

 
 \begin{proof}   Lemma  \ref{sign2eigenvalue} implies that  $\lambda_2(L_G)<0$ for any metric $G$ on $W$. Therefore,  the first eigenvalue of $L_G$ is negative,  and consequently  $Y(W,[G])<0$.  Thereby,   $Y(W)\leq 0$. 
 \end{proof}

 \begin{Example}
 
 $\ $
 
   a) Let $M$ be a closed manifold with $Y(M)<0$. For instance, take  $M={\mathbb{H}^ 3} / \Gamma$ any compact quotient of the 3-dimensional Hyperbolic space.
Let us consider $W:=M\sqcup M$, the disjoint union of two copies of $M$.  We denote with $M_i$ $(i=1,2)$  the  copies of $M$. If $G$ is any metric on $W$, let us denote by $G_i$ the restriction of $G$ to $M_i$. Recall   as the sign of the first eigenvalue of the conformal Laplacian has the same sign that the Yamabe constant.   Thereby, 
 $$\lambda_2(L_G)=\min\Big( \max_{i=1,2}(\lambda_1(L_{G_i})), \lambda_{2}(L_{G_1}),\lambda_{2}(L_{G_2}) \Big)<0$$
and 
$$Y^2(M\sqcup M)=-\infty.$$

 b)    Let $M$ be a compact quotient of a non abelian nilpotent Lie group. It is known that  $Y(M)=0$ but the Yamabe invariant  is not attained by any conformal class.   Then,    $W=M\sqcup M$   satisfies  that $Y^2(W)=-\infty$ and $Y(W)=0$.

 \end{Example}

\begin{Proposition} If $W$ admits a metric of zero scalar curvature, then  $Y^2(W)>0$. 
\end{Proposition}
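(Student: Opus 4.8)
The plan is to exhibit one conformal class on which the second Yamabe constant is already positive, and then conclude via $Y^2(W)=\sup_{G}Y^2(W,[G])\ge Y^2(W,[G])$. The natural choice is the conformal class of a scalar-flat metric $G_0$, whose existence is precisely our hypothesis (and I work with $W$ connected, which is needed below). First I would record that $s_{G_0}\equiv 0$ forces $Y(W,[G_0])=0$: the numerator of the Yamabe functional reduces to $\int_W a_k|\nabla u|^2\,dv_{G_0}\ge 0$, so $Y(W,[G_0])\ge 0$, while testing with the constant function $u\equiv 1$ gives $J(1)=0$, hence equality. In particular $[G_0]$ has non-negative Yamabe constant, so Theorem \ref{AH-bounds} and Theorem \ref{mainresultAH} both apply to it, and it suffices to prove $Y^2(W,[G_0])>0$.

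Next I would run a dichotomy based on the upper bound in Theorem \ref{AH-bounds}. Since $Y(W,[G_0])=0$, that bound reads
$$Y^2(W,[G_0])\le \big[\,0+Y(S^{k})^{k/2}\,\big]^{2/k}=Y(S^{k}).$$
If equality holds, then $Y^2(W,[G_0])=Y(S^{k})>0$ and we are done. If instead the inequality is strict, i.e. $Y^2(W,[G_0])<[Y(W,[G_0])^{k/2}+Y(S^{k})^{k/2}]^{2/k}$, then Theorem \ref{mainresultAH}(a) guarantees that $Y^2(W,[G_0])$ is attained by a generalized metric. At that point the \emph{Moreover} clause of Theorem \ref{AH-bounds} applies (this is exactly where connectedness of $W$ enters): attainment makes the left-hand inequality strict, so
$$Y^2(W,[G_0])>2^{2/k}\,Y(W,[G_0])=2^{2/k}\cdot 0=0.$$
In either alternative $Y^2(W)\ge Y^2(W,[G_0])>0$, which is the assertion.

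The main obstacle is the borderline configuration in which $Y^2(W,[G_0])$ meets the upper bound $Y(S^{k})$: there one cannot invoke the attainment statement of Theorem \ref{mainresultAH}(a), and therefore cannot appeal to the strict form of the left inequality in Theorem \ref{AH-bounds}. The dichotomy above is designed precisely to sidestep this, since in that degenerate case the limiting value $Y(S^{k})$ is itself strictly positive. The second delicate point is the hypothesis of connectedness, which is genuinely needed: for a disconnected $W$ a scalar-flat metric already yields a zero eigenvalue of $L_{G_0}$ of multiplicity at least two, so $\lambda_2(L_{G_0})=0$, and the strict inequality of Theorem \ref{AH-bounds} is unavailable, so the argument must be carried out on a connected manifold.
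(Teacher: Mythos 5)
Your proof is correct, and its skeleton matches the paper's: work in the conformal class of the scalar-flat metric $G_0$, observe $Y(W,[G_0])=0$, and use Theorem \ref{mainresultAH}(a) to obtain attainment of $Y^2(W,[G_0])$ by a generalized metric in the non-trivial case. Where the two arguments part ways is the closing step. The paper first notes $\lambda_1(L_{G_0})=0$ and $\lambda_2(L_{G_0})>0$ (connectedness enters silently here, since $L_{G_0}=a_k\Delta_{G_0}$), deduces $Y^2(W,[G_0])\geq 0$ from the conformal invariance of the sign of the second eigenvalue, and then rules out $Y^2(W,[G_0])=0$ by contradiction: an attaining generalized metric $\tilde{G}$ would satisfy $\lambda_2(L_{\tilde{G}})=0$, against that positivity. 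You instead split on whether $Y^2(W,[G_0])$ sits at the upper bound $Y(S^k)>0$ of Theorem \ref{AH-bounds}, and in the strict case you combine attainment with the ``Moreover'' clause of that theorem to get $Y^2(W,[G_0])>2^{2/k}\cdot 0=0$. In effect you use the quoted strictness statement as a black box where the paper re-proves its $Y=0$ special case by hand; this makes your route slightly shorter and avoids applying the eigenvalue-sign argument to generalized metrics, at the cost of leaning more heavily on the cited result. A further merit of your write-up is that it makes the connectedness hypothesis explicit; it is genuinely needed, not just for the method: for $W=T^k\sqcup T^k$ the flat metric is scalar-flat, yet every metric $G$ on $W$ has $\lambda_2(L_G)\leq\max\big(\lambda_1(L_{G_1}),\lambda_1(L_{G_2})\big)\leq 0$ because the torus admits no metric of positive scalar curvature, so $Y^2(W)=0$ and the conclusion fails. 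This hypothesis is implicit but unstated both in the proposition and in the paper's step $\lambda_2(L_{G_0})>0$.
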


\begin{proof} If $Y(W)>0$, then it is clear that $Y^2(W)>0$. Assume that $Y(W)=Y(W,[G_0])=0$ for some metric $G_0$.  Then,  $\lambda_1(L_{G_0})=0$ and $\lambda_2(L_{G_0})>0$. Therefore, $Y^2(W,[G_0])\geq 0$. If $Y^2(W,[G_0])>0$, then we have nothing to prove. If $Y^2(W,[G_0])=0$,  then by Theorem \ref{mainresultAH} part a)  the second Yamabe constant  is achieved by a generalized metric $\tilde{G}$. Therefore $\lambda_2(L_{\tilde{G}})=0$, which is a contradiction.  

\end{proof}

\begin{Remark} Let $N$ be a closed manifold obtained  by performing surgery  on $(W,G)$ of codimension at least 3.  B\"ar and Dahl proved in $($\cite{Bar-Dahl}, Theorem 3.1$)$ that  
 given $l\in $ and $\varepsilon>0$ there exists a metric $H$ on $N$ such that $|\lambda_i(L_H)-\lambda_i(L_G)|<\varepsilon$ for  $1\leq i \leq l$. Therefore, the positivity of the second Yamabe invariant is preserved under surgery of codimension at least 3.  
\end{Remark}


 \subsection{Bounds for the second Yamabe invariant and the second $N-$Yamabe invariant.}

 $ \ $

An immediate consequence of the Theorem \ref{AH-bounds} is the following proposition: 
 
 \begin{Proposition}\label{bound2yamabeinvariant} If $W$  admits a metric of zero scalar curvature, then  
 
 \begin{equation}
 \nonumber 2^ {\frac{2}{k}}Y(W)\leq Y^2(W)\leq
[Y(W)^ {\frac{k}{2}}+Y(S^{k})^{\frac{k}{2}}]^{\frac{2}{k}}.
\end{equation}
 
 \end{Proposition}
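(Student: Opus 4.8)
The plan is to take the supremum over conformal classes directly in the two inequalities of Theorem \ref{AH-bounds}. First I would record what the hypothesis buys us. If $G_0$ is a metric of zero scalar curvature, then $L_{G_0}=a_k\Delta_{G_0}$, so $\lambda_1(L_{G_0})=0$ and $Y(W,[G_0])=0$; hence $Y(W)=\sup_{[G]}Y(W,[G])\geq 0$. Since a nonnegative supremum cannot be influenced by classes contributing negative values, the classes with $Y(W,[G])\geq 0$ are cofinal for the Yamabe invariant, i.e. $Y(W)=\sup_{\,Y(W,[G])\geq 0}Y(W,[G])$. On precisely these classes Theorem \ref{AH-bounds} applies.

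For the lower bound I would feed each class with $Y(W,[G])\geq 0$ into the left inequality of Theorem \ref{AH-bounds}, obtaining $2^{\frac{2}{k}}Y(W,[G])\leq Y^2(W,[G])\leq Y^2(W)$. Taking the supremum of the leftmost quantity over all such classes and invoking the identity of the previous paragraph gives $2^{\frac{2}{k}}Y(W)\leq Y^2(W)$, which is the left inequality of the proposition.

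For the upper bound I would use that $t\mapsto \big(t^{\frac{k}{2}}+Y(S^k)^{\frac{k}{2}}\big)^{\frac{2}{k}}$ is nondecreasing on $t\geq 0$. For a class with $Y(W,[G])\geq 0$, the right inequality of Theorem \ref{AH-bounds} together with $Y(W,[G])\leq Y(W)$ yields
$$Y^2(W,[G])\leq \big[Y(W,[G])^{\frac{k}{2}}+Y(S^k)^{\frac{k}{2}}\big]^{\frac{2}{k}}\leq \big[Y(W)^{\frac{k}{2}}+Y(S^k)^{\frac{k}{2}}\big]^{\frac{2}{k}}=:C.$$
It remains to check that the remaining classes also obey $Y^2(W,[G])\leq C$, so that $Y^2(W)=\sup_{[G]}Y^2(W,[G])\leq C$.

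The \emph{main obstacle} is exactly this last point: controlling conformal classes with $Y(W,[G])<0$, where Theorem \ref{AH-bounds} is not directly available. Fix such a class, so $\lambda_1(L_G)<0$. By Lemma \ref{samesigneigenvalue} the sign of $\lambda_2$ is a conformal invariant, so there are two cases. If $\lambda_2(L_G)<0$, then $Y^2(W,[G])\leq \lambda_2(L_G)\,\mathrm{vol}(W,G)^{\frac{2}{k}}<0$, whence $Y^2(W,[G])=-\infty$ by the vanishing result of Ammann and Humbert (\cite{Ammann-Humbert}, Proposition 8.1), and this is harmless. If instead $\lambda_2(L_G)\geq 0$, I would bound $Y^2(W,[G])$ from above by a single concentrated sphere‑bubble test configuration — the same construction underlying the upper bound of Theorem \ref{AH-bounds}: at small scales the scalar‑curvature term is negligible and one bubble realizes the quotient $Y(S^k)$ in the variational characterization of $Y^2$, giving $Y^2(W,[G])\leq Y(S^k)\leq C$, the last step because $Y(W)\geq 0$. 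Thus every class satisfies $Y^2(W,[G])\leq C$, and combining this with the lower bound established above completes the proof. The delicate configuration to handle carefully is precisely $\lambda_1(L_G)<0\leq\lambda_2(L_G)$, where the bubble estimate must be carried out by hand rather than quoted from Theorem \ref{AH-bounds}.
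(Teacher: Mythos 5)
Your skeleton---reduce to Theorem \ref{AH-bounds} by taking suprema over conformal classes, the zero-scalar-curvature metric serving only to guarantee $Y(W)\geq 0$---is exactly the paper's argument: the paper states the proposition as an immediate consequence of Theorem \ref{AH-bounds} and never discusses the classes with $Y(W,[G])<0$. Your instinct that those classes must be controlled for the upper bound is sound (they need not be vacuous: there exist metrics with $\lambda_1(L_G)<0\leq\lambda_2(L_G)$, and for such classes $Y^2(W,[G])\geq 0$, so they genuinely compete in the supremum defining $Y^2(W)$), and your disposal of the subcase $\lambda_2(L_G)<0$ is correct. So up to that point you are, if anything, more careful than the paper.

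The gap is the subcase $\lambda_1(L_G)<0\leq\lambda_2(L_G)$. A ``single concentrated sphere-bubble'' cannot bound $Y^2(W,[G])$ from above: in the variational characterization, $Y^2$ is an infimum over pairs $(u,V)$ with $V\in Gr^2_u$ \emph{two}-dimensional of a supremum over $V$, so one test function only yields information about $Y^1$. The obvious repair---two bubbles at distinct points, as in the paper's proof of Theorem \ref{limitY2}---gives $Y^2(W,[G])\leq 2^{\frac{2}{k}}(Y(S^k)+\varepsilon)$, and $2^{\frac{2}{k}}Y(S^k)>[Y(W)^{\frac{k}{2}}+Y(S^k)^{\frac{k}{2}}]^{\frac{2}{k}}$ whenever $Y(W)<Y(S^k)$, so that does not close the argument either. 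What works is to pair one bubble with a negative-energy direction, which exists precisely because $\lambda_1(L_G)<0$: choose $w$ with $\int_W wL_Gw\,dv_G<0$, multiply by a cutoff vanishing near the concentration point $q$ (harmless for $k\geq 3$ since points have zero capacity), and let $v_\varepsilon$ be a bubble supported near $q$ with $\|v_\varepsilon\|_{p_k}=1$ and $\int_W v_\varepsilon L_Gv_\varepsilon\,dv_G\leq Y(S^k)+\varepsilon$, so that $w$ and $v_\varepsilon$ have disjoint supports. Taking $u_t:=t|w|+v_\varepsilon$ and $V:=\spn(w,v_\varepsilon)\in Gr^2_{u_t}$, all cross terms vanish, hence the supremum over $V$ of $\int_W vL_Gv\,dv_G/\int_W u_t^{p_k-2}v^2dv_G$ equals $\max\Bigl(\tfrac{\int_W wL_Gw\,dv_G}{t^{p_k-2}\int_W |w|^{p_k}dv_G},\,\tfrac{\int_W v_\varepsilon L_Gv_\varepsilon dv_G}{\int_W v_\varepsilon^{p_k}dv_G}\Bigr)\leq Y(S^k)+\varepsilon$, the first entry being negative; letting $t\to 0$ sends the volume factor $(\int_W u_t^{p_k}dv_G)^{\frac{2}{k}}$ to $1$, so $Y^2(W,[G])\leq Y(S^k)\leq [Y(W)^{\frac{k}{2}}+Y(S^k)^{\frac{k}{2}}]^{\frac{2}{k}}$, using $Y(W)\geq 0$. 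With this replacement for your one-bubble step, your proof is complete.
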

 
 If $W=S^k$,  $Y^2(S^k)=2^{\frac{2}{k}}Y(S^k)$. From Theorem \ref{AH-bounds}, we have that  $Y^2(S^k,[g^k_0])=2^{\frac{2}{k}}Y(S^k)$. Hence,  the second Yamabe invariant of $S^k$ is achieved by the second Yamabe constant of the conformal class  $[g_0^k]$. But recall that $Y^2(S^k,[g^k_0])$   is not achieved, even by a generalized metric.

 Also, it follows from Proposition \ref{bound2yamabeinvariant} that the second Yamabe invariant of a $k-$dimensional manifold is bounded from above  by the second Yamabe invariant of the $k-$dimensional sphere:  
 
 $$Y^2(W)\leq Y^2(S^k).$$
 
 \begin{Example}  Let $G$ be the Riemannian metric on $S^k\sqcup S^k$ whose restriction to each copy of $S^k$ is $g_0^k$.
  Then,  $Y^2(S^k\sqcup S^k,G)=2^{\frac{2}{k}}Y(S^k)$ $($ cf. \cite{Ammann-Humbert}, Proposition 5.1$)$. Thus, 
$ Y^2(S^k\sqcup S^k)= Y^2(S^k)$.
 \end{Example}
 
 \begin{Example} Let $W=S^{k-1}\times S^1$ $(k\geq 3)$. Using that $Y(S^{k-1}\times S^1)=Y(S^k)$ $($ cf. \cite{Kobayashi} and \cite{Schoen}$)$ it follows from   Proposition \ref{bound2yamabeinvariant} that  
$Y^2(S^{k-1}\times S^1)=2^{\frac{2}{k}}Y(S^k)$.
 \end{Example}

 \begin{Example} It was computed by LeBrun in \cite{LeBrun} that $Y(\mathbb{C}P^2)=12\sqrt{2}\pi$. Then,  $24\pi \leq Y^2(\mathbb{C}P^2)\leq 4\sqrt{42}\pi$.  
 
    Bray and Neves proved  in \cite{Bray-Neves} that $Y(\mathbb{R}P^3)=2^{-\frac{2}{3}}Y(S^3)$. Therefore,  the second Yamabe invariant of    $\mathbb{R}P^3$ is bounded by $Y(S^3)\leq Y^2(\mathbb{R}P^3)\leq (\frac{3}{2})^{\frac{2}{3}}Y(S^3)$. 
    
    Both,  are examples where the second Yamabe invariant is positive but strictly minor than the second Yamabe invariant of the sphere.
 
 \end{Example}

 Let $M^m$ and $N^n$ be closed manifolds $(m,n\geq 2)$ with positive Yamabe invariant.  An immediate consequence of Theorem \ref{limitY2} is that  
 $$Y^2(M\times N)
 \geq 2^ {\frac{2}{m+n}}\sup_{\{ s_g>0,   s_h>0\}} \max\big(Y(M\times \re^n,[g+g_e^n]), Y(N\times \re^m,[h+g_e^m])\big).$$
 For $S^n\times S^n$, we get that $ Y^2(S^n\times S^n)\geq2^{\frac{1}{n}}Y(S^n\times\re^n,[g_0^n+g^n_e])$. Note that if $Y(M)>0$ and $N$ is any closed manifold, then  $Y^2(M\times N)>0$.

  \vspace{0.4 cm}

In the following  proposition we use several known lower bounds  for the Yamabe invariant  to deduce lower bounds for the second Yamabe invariant of a Riemannian product.

 \begin{Proposition}\label{proplowerboundsY2fromYbounds} $\ $
 
 \begin{itemize}

 \item[i)] Let $M^m\times N^n$ with  $m,n\geq 3$ and $Y(M)>0$. Then, 
 
 $$Y^2(M\times N)\geq 2^ {\frac{2}{m+n}}B_{m,n}Y(M)^{\frac{m}{m+n}}Y(S^n)^{\frac{n}{m+n}}.$$
 
 where $B_{m,n}=a_{m+n}(m+n)(ma_{m})^{-\frac{m}{m+n}}(na_n)^{-\frac{n}{m+n}}$.

 \item[ii)]  Let $M$ be a 2-dimensional closed manifold.  Then, 

$$Y^2(M\times S^2)\geq \frac{2c}{3^ {\frac{3}{4}}}Y(S^4),$$
 
 where $c=(1.047)^2$.
 
 \item [iii)]Let  $(M^m,g)$ be a closed manifold with Ricci curvature bounded from below by $(m-1)$. Then, 
 
 $$Y^2(M\times S^1)\geq 2^ {\frac{2}{m+1}}\Big(\frac{vol(M,g)}{vol(S^m,g_0^m)}\Big)^{\frac{2}{m+1}}Y(S^{m+1}).$$
 
 \item [iv)]Let $M^3$ and $N^2$ be closed manifolds.  Then,
 
  $$Y^2(M\times S^2)\geq 2^ {\frac{2}{5}}(0.62)Y(S^5)$$ 
  
  and 
  
  $$Y^2(N\times S^3)\geq 2^ {\frac{2}{5}}(0.75)Y(S^5).$$

 \end{itemize}

 \end{Proposition}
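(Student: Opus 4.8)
The plan is to derive every bound from a single backbone inequality that is essentially free once Theorem \ref{limitY2} is in hand, and then to insert, part by part, the appropriate lower bound for the Yamabe constant of a product with a Euclidean factor. For the backbone, fix a metric $g$ on $M$ with $s_g>0$ (here $m\geq 2$) and any metric $h$ on $N$. For every $t>0$ the definition of the second Yamabe invariant gives $Y^2(M\times N,[g+th])\leq Y^2(M\times N)$, so letting $t\to+\infty$ and applying Theorem \ref{limitY2} yields
$$Y^2(M\times N)\geq 2^{\frac{2}{m+n}}Y(M\times\re^n,[g+g_e]).$$
The hypotheses of Theorem \ref{limitY2} are not symmetric: only a factor that carries positive scalar curvature and has dimension at least $2$ may occupy the first slot, while the other factor is the one replaced by $\re^{n}$. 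Accordingly, in each part I will put whichever factor is a round sphere of dimension $\geq 2$ into that slot (using $S^{k}\times M\cong M\times S^{k}$), which is the source of the factor-swaps below.

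For part (i), with $m,n\geq 3$ and $Y(M)>0$, I would feed into the backbone the product-type lower estimate $Y(M\times\re^n,[g+g_e])\geq B_{m,n}\,Y(M,[g])^{\frac{m}{m+n}}\,Y(S^n)^{\frac{n}{m+n}}$, whose constant $B_{m,n}$ is the one furnished by the product lower bound for Yamabe constants in \cite{A-F-P} (the factor $Y(S^n)$ entering as the sharp Euclidean Sobolev constant of $\re^n$). Since the backbone holds for every $g$ with $s_g>0$, and every conformal class with positive Yamabe constant is represented by a metric of positive constant scalar curvature, taking the supremum over conformal classes turns $Y(M,[g])$ into $Y(M)$ and gives the stated inequality. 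Part (iii) follows the same scheme with $N=S^1$, so that the backbone reads $Y^2(M\times S^1)\geq 2^{\frac{2}{m+1}}Y(M\times\re,[g+g_e])$; for the fixed metric $g$ with $\mathrm{Ric}_g\geq(m-1)$ one then inserts the volume-comparison bound $Y(M\times\re,[g+g_e])\geq\big(vol(M,g)/vol(S^m,g_0^m)\big)^{\frac{2}{m+1}}Y(S^{m+1})$, which is sharp for $M=S^m$ because $Y(S^m\times\re,[g_0^m+g_e])=Y(S^{m+1})$.

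For parts (ii) and (iv) the ambient dimension is small and one factor is $S^2$ or $S^3$. After swapping so that the spherical factor sits first, the backbone reduces the three claims to lower bounds for $Y(S^2\times\re^2,[g_0^2+g_e])$, for $Y(S^2\times\re^3,[g_0^2+g_e])$, and for $Y(S^3\times\re^2,[g_0^3+g_e])$, respectively; here the product estimate of part (i) is unavailable, since it would require the non-standard two-dimensional quantity $Y(S^2)$. Instead I would substitute the explicit low-dimensional lower bounds for these Yamabe constants obtained in \cite{A-F-P} through the numerical evaluation of the Gagliardo--Nirenberg constants $\alpha_{m,n}$; the numbers $1.047$, $0.62$ and $0.75$ are precisely the resulting numerical constants, each consistent with $Y(S^m\times\re^n,[g_0^m+g_e])<Y(S^{m+n})$ from (\ref{upperboundYMR}).

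I expect the only real difficulty to lie in the inputs rather than in the present argument. Each part rests on a sharp lower bound for $Y(M\times\re^n,[g+g_e])$, and producing those bounds---the product inequality that yields $B_{m,n}$, the Ricci/volume comparison, and above all the numerical constants for the sphere factors---is genuine Aubin- and Gagliardo--Nirenberg-type analysis carried out in the cited literature, not something recoverable from Theorem \ref{limitY2} alone. Once these estimates are granted, every part is an immediate substitution, so the remaining care is bookkeeping: matching each stated constant to its source, tracking the volume normalizations, and applying the factor-swap so that the positively curved factor occupies the correct slot.
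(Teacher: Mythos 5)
Your backbone is correct, and it is in fact the inequality the paper itself displays immediately before Proposition \ref{proplowerboundsY2fromYbounds}: since $Y^2(M\times N)\geq Y^2(M\times N,[g+th])$ for every $t$, Theorem \ref{limitY2} gives $Y^2(M\times N)\geq 2^{\frac{2}{m+n}}Y(M\times\re^n,[g+g_e])$ for any $g$ with $s_g>0$, and your factor-swaps putting the round sphere in the first slot are exactly the right device for parts ii) and iv), where the other factor carries no curvature hypothesis. The paper's official proof differs only in packaging: it applies Proposition \ref{bound2yamabeinvariant} (the Ammann--Humbert inequality $Y^2\geq 2^{2/k}Y$ at the level of invariants) to known lower bounds for the Yamabe \emph{invariants} of the products, bounds which were themselves obtained in the literature from estimates on $Y(\cdot\times\re^n,[\cdot+g_e])$ via Theorem \ref{thmAFP}. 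So both routes consume the same inputs and differ only in where the factor $2^{2/k}$ is inserted.

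The genuine defect is in where you propose to get those inputs. For parts ii) and iv) you claim the numbers $(1.047)^2$, $0.62$, $0.75$ are the lower bounds ``obtained in \cite{A-F-P} through the numerical evaluation of the Gagliardo--Nirenberg constants $\alpha_{m,n}$''. That cannot work: the Gagliardo--Nirenberg computation, equation (\ref{estimationRnyamabe}), gives the exact value of the \emph{restricted} constant $Y_{\re^n}(M\times\re^n,g+g_e)$, which satisfies $Y(M\times\re^n,[g+g_e])\leq Y_{\re^n}(M\times\re^n,g+g_e)$; it bounds from \emph{above} precisely the quantity your backbone needs bounded from \emph{below}, so no lower bound on $Y(S^2\times\re^2,[g_0^2+g_e])$, $Y(S^2\times\re^3,[g_0^2+g_e])$, or $Y(S^3\times\re^2,[g_0^3+g_e])$ can be extracted from it. (In this paper the $\alpha_{m,n}$ numerics feed only the $N$-constrained statements, Corollary \ref{gagliardolimitY2N} and Proposition \ref{Nproductlowerbound}.) The actual sources are isoperimetric-profile comparisons: \cite{Petean-Ruiz} for $S^2\times\re^2$ (whence $c=(1.047)^2$), \cite{Petean-Ruiz2} for $S^2\times\re^3$ and $S^3\times\re^2$ (whence $0.62$ and $0.75$), and \cite{Petean} for the Ricci/volume bound in iii). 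Likewise, the product inequality with constant $B_{m,n}$ in i) is due to Ammann--Dahl--Humbert \cite{Ammann-Dahl-Humbert}, not to \cite{A-F-P}. Since this proposition is precisely an exercise in quoting the correct estimates, these attributions are the substance of the proof rather than bookkeeping; with the sources corrected, your argument goes through.
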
 
 
 The statements in Proposition \ref{proplowerboundsY2fromYbounds} are immediate consequence of apply Proposition \ref{bound2yamabeinvariant} to the lower bounds for the Yamabe invariant obtained in \cite{Ammann-Dahl-Humbert}, \cite{Petean},    \cite{Petean-Ruiz},   and \cite{Petean-Ruiz2}.   In all the cases, in order to obtain the bounds,   Theorem \ref{thmAFP}  (first equali\-ty)  
 is used.
 In  \cite{Petean-Ruiz}  and   \cite{Petean}, the authors estimated the isoperimetric profile of $S^2\times \re^2$ and  $M\times S^1$   and used them   to obtain lower bounds for $Y(M\times \re^2)$   and $Y(M\times \re)$ respectively. In \cite{Petean-Ruiz2}, the authors compare the isoperimetric profile of $S^2\times\re^3$ and $S^3\times \re^2$ with the one of $S^5$, and used it to obtain a lower bounds of  $Y(S^2\times \re^3,[g^2_0+g_e])$ and $Y(S^3\times \re^2,[g^3_0+g_e])$.
 In the following,  for convenience of the reader,  we state  the bounds obtained by Ammann, Dahl, and Humbert, Petean, and Petean and Ruiz:
 
 $\ $

 \begin{itemize}
 \item[i)] In \cite{Ammann-Dahl-Humbert},  Ammann, Dahl, and Humbert  proved that the Yamabe invariant of  a Riemannian product $M^m\times N^n$ with  $m,n\geq 3$ and $Y(M)\geq0$ is bounded from below by
 
 $$Y(M\times N)\geq B_{m,n}Y(M)^{\frac{m}{m+n}}Y(S^n)^{\frac{n}{m+n}}.$$

 \item[ii)]  In  \cite{Petean-Ruiz},  Petean and Ruiz proved that for any $2-$dimensional manifold $M$  
 
 $$Y(M\times S^2)\geq \frac{\sqrt{2}c}{3^ {\frac{3}{4}}}Y(S^4).$$

\item[iii)] It was proved by Petean in \cite{Petean} that if $(M^m,g)$ is a closed Riemannian manifold with $Ricci(g)\geq (m-1)g$, then 

$$Y(M\times \re,[g+g_e])\geq \Big(\frac{vol(M,g)}{vol(S^m,g_0^m)}\Big)^{\frac{2}{m+1}}Y(S^{m+1}).$$

\item[iv)] In \cite{Petean-Ruiz2}, Petean and Ruiz  proved that if $M$ is a closed $3-$dimensional mani\-fold and if $N$ is a closed $2-$dimensional ma\-ni\-fold, then $Y(M\times S^2)\geq 0.63Y(S^5)$ and  $Y(N\times S^3)\geq 0.75Y(S^5)$.

 \end{itemize}


\begin{Proposition}\label{Nproductlowerbound} Let $M^m$ be a closed manifold with $Y(M)>0$ and  $N^n$  any  closed manifold. Then, 

$$Y_N^2(M\times N)\geq \frac{2^{\frac{2}{m+n}}A_{m,n}Y(M)^{\frac{m}{m+n}}}{\alpha_{m,n}}.$$

\end{Proposition}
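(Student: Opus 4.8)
The plan is to exploit the definition of the second $N$-Yamabe invariant as a supremum and to feed into it the sharp asymptotics already recorded in Corollary \ref{gagliardolimitY2N}. Recall that
$$Y^2_N(M\times N)=\sup_{g\in\mathcal{M}_M^Y,\ h\in\mathcal{M}_N}Y^2_N(M\times N,g+h).$$
First I would fix any conformal class on $M$ with positive Yamabe constant; solving the Yamabe problem produces a minimizer $g\in\mathcal{M}_M^Y$ of constant positive scalar curvature $s_g$, and for any metric $h$ on $N$ each rescaling $th$ (with $t>0$) again lies in $\mathcal{M}_N$. Hence, for every $t>0$,
$$Y^2_N(M\times N)\geq Y^2_N(M\times N,g+th),$$
so that $Y^2_N(M\times N)\geq\limsup_{t\to+\infty}Y^2_N(M\times N,g+th)$. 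Since $g$ has constant positive scalar curvature, Corollary \ref{gagliardolimitY2N} identifies this limit and gives
$$Y^2_N(M\times N)\geq\frac{2^{\frac{2}{m+n}}A_{m,n}s_g^{\frac{m}{m+n}}vol(M,g)^{\frac{2}{m+n}}}{\alpha_{m,n}}.$$

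The second step is the key algebraic observation that makes the right-hand side scale-invariant and expressible through the conformal Yamabe constant alone. Because $g$ realizes the Yamabe constant and has constant scalar curvature, testing the Yamabe functional against the constant function gives $Y(M,[g])=s_g\,vol(M,g)^{\frac{2}{m}}$, that is $s_g=Y(M,[g])\,vol(M,g)^{-\frac{2}{m}}$. Substituting this and using $\frac{2}{m}\cdot\frac{m}{m+n}=\frac{2}{m+n}$, the volume factors cancel and
$$s_g^{\frac{m}{m+n}}vol(M,g)^{\frac{2}{m+n}}=Y(M,[g])^{\frac{m}{m+n}}.$$
Thus the lower bound reads $Y^2_N(M\times N)\geq 2^{\frac{2}{m+n}}A_{m,n}\,\alpha_{m,n}^{-1}\,Y(M,[g])^{\frac{m}{m+n}}$, valid for every conformal class of positive Yamabe constant.

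Finally I would take the supremum over all such conformal classes. Each conformal class with positive Yamabe constant contributes a constant-scalar-curvature Yamabe metric in $\mathcal{M}_M^Y$, so the values $Y(M,[g])$ realized by $g\in\mathcal{M}_M^Y$ have supremum $Y(M)$; as $x\mapsto x^{\frac{m}{m+n}}$ is continuous and increasing this yields $\sup Y(M,[g])^{\frac{m}{m+n}}=Y(M)^{\frac{m}{m+n}}$, and hence
$$Y^2_N(M\times N)\geq\frac{2^{\frac{2}{m+n}}A_{m,n}Y(M)^{\frac{m}{m+n}}}{\alpha_{m,n}},$$
as claimed. I expect the only genuinely delicate point to be the scale-invariant identity $s_g^{\frac{m}{m+n}}vol(M,g)^{\frac{2}{m+n}}=Y(M,[g])^{\frac{m}{m+n}}$ together with the observation that the supremum over $\mathcal{M}_M^Y$ of $Y(M,[g])$ recovers $Y(M)$; the remainder is a direct application of Corollary \ref{gagliardolimitY2N} and monotonicity.
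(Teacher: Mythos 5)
Your proof is correct and follows essentially the same route as the paper's: plug the metrics $g+th$ (with $g$ a Yamabe metric of constant positive scalar curvature) into the definition of $Y^2_N(M\times N)$ as a supremum, identify the limit via Corollary \ref{gagliardolimitY2N}, and take the supremum over Yamabe metrics to recover $Y(M)$. The only cosmetic difference is that the paper normalizes $g$ to unit volume at the outset, whereas you keep the volume factor and cancel it through the identity $s_g^{\frac{m}{m+n}}vol(M,g)^{\frac{2}{m+n}}=Y(M,[g])^{\frac{m}{m+n}}$, which amounts to the same computation.
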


\begin{proof} Let $g$ be a Yamabe metric  with positive Yamabe constant and  unit volume. Let $h$ be any Riemannian metric on $N$.  From Theorem \ref{limitY2N} and Corollary  \ref{gagliardolimitY2N}  we obtain 

$$ Y_N^2(M\times N)\geq \lim_{t\to+\infty}Y^2_N(M\times N,g+th) $$

$$=2^{\frac{2}{m+n}}Y_{\re^n}(M\times \re^n,g+g_e)=\frac{2^{\frac{2}{m+n}}A_{m,n}Y(M,[g])^{\frac{m}{m+n}}}{\alpha_{m,n}}.$$

The proposition follows taking the supreme over the set of Yamabe metrics  on $M$ with unit volume.     
\end{proof}

\begin{Example}   From the proposition above we get that $Y^2_{S^2}(S^2\times S^2)\geq 84.01080$   and $Y^3_{S^3}(S^3\times S^3)\geq 119.33249$. Here, we used the numerical computations of the Glariardo-Nirenberg constants carried out in \cite{A-F-P}, i.e,  $\alpha_{2,2}=0.41343$ and $\alpha_{3,3}=0.31257$.  
\end{Example}


\begin{thebibliography}{aa}

\bibitem{A-F-P} K. Akutagawa, L. Florit, and J. Petean, {\it On Yamabe constant of Riemannian  Products}, Communications in Analysis and Geometry
 {\bf 15} (2007), 947-969.







\bibitem{Ammann-Humbert} B. Ammann and E. Humbert, {\it The second Yamabe invariant},
Journal of Functional Analysis {\bf 235} (2006), 377-412.

\bibitem{A-D-H} B. Ammann, M. Dahl, and E. Humbert, {\it Smooth Yamabe invariant and surgery},
 Journal of Differential Geometry  {\bf 94} (2013), 1:1-58.

\bibitem{Ammann-Dahl-Humbert} B. Ammann, M. Dahl, and E. Humbert, {\it The conformal Yamabe constant of product manifolds},
 Proceedings of the American Mathematical Society  {\bf 141} (2013), 295-307.

\bibitem{Aubin} T. Aubin,  {\it \'Equations diff\'erentielles non-lin\'eaires et probl\'eme de Yamabe concernant la courbure scalaire},
Journal de Math\'ematiques Pures et Appliqu\'ees {\bf 55} (1976), 3:269-296.

\bibitem{Bar-Dahl} C. B\"ar and M. Dahl, {\it Small eigenvalues of the conformal Laplacian}, Geometric and Functional Analysis, 
 {\bf 13} (2003), 483-508.

\bibitem{Bray-Neves} H. L. Bray and A. Neves, {\it  Classification of prime 3-manifolds with Yamabe invariant greater than $RP^3$}, Annals of Mathematics 
 {\bf 159} (2004), 407-424.


\bibitem{Djadli-Jourdain} Z. Djadli and A.  Jourdain, {\it Nodal solutions for scalar curvature type equations with perturbations terms on compact Riemannian manifolds},
   Bollettino della Unione Matematica Italiana. Serie VIII. Sez. B. Art.  Ric. Mat. {\bf 5} (2002),  1:205-226.




\bibitem{ElSayed} S. El Sayed, {\it Second eigenvalue of the {Y}amabe operator and applications},
     Calculus of Variations and Partial Differential Equations, {\bf 50} (2014),  3-4:665-692.


\bibitem{Hebey} E. Hebey, {\it Nonlinear Analysis on Manifolds: Sobolev Spaces and Inequalities}, Courant Lecture Notes  {\bf 5},  AMS/CIMS, New York (2000),  second edition, 290 pages. 

\bibitem{Hebey-Vaugon} E. Hebey and M. Vaugon, {\it Existence and multiplicity of nodal solutions for nonlinear elliptic equations with critical Sobolev growth},
     Journal of Functional Analysis, {\bf 119} (1994),  2:298-318.

\bibitem{Holcman} D. Holcman, {\it Solutions nodales sur les vari\'et\'es Riemannienes},
     Journal of Functional Analysis, {\bf 161} (1999),  1:219-245.

\bibitem{Jourdain} A.  Jourdain, {\it Solutions nodales pour des \'equations du type courbure scalaire sur la sph\'ere},
   Bulletin des Sciences Mathématiques, {\bf 123} (1999),  299-327.


\bibitem{K-W} J. Kazdan and F. W. Warner,  {\it Scalar curvature and conformal deformation of Riemannian
              structure}, Journal  of Differential Geometry {\bf 10} (1975), 113-134.

\bibitem{Kobayashi} O. Kobayashi,  {\it Scalar curvature of a metric with unit volume}, Mathematische Annalen,  {\bf 279} (1987), 2:253-265.

\bibitem{LeBrun} C. LeBrun,  {\it  Yamabe constant  and perturbed Seiberg-Witten equations}, Communications in Analysis and Geometry
 {\bf 5} (1997), 535-553.

    

\bibitem{Petean} J. Petean,  {\it Isoperimetric Regions in spherical cones and Yamabe constants of $M\times S^1$}, Geometriae Dedicata {\bf 143 } (2009), 37-48.

\bibitem{Petean2} J. Petean, {\it On nodal solutions of the {Y}amabe equation on products}, Journal of Geometry and Physics, {\bf 59} (2009), 10:1395-1401. 
    
\bibitem{Petean-Ruiz} J.  Petean and J. M.  Ruiz,  {\it Isoperimetric profile comparisons and Yamabe constants}, Annals of Global Analysis and Geometry,  {\bf 40} (2011), 177-189.

\bibitem{Petean-Ruiz2} J.  Petean and J. M.  Ruiz,  {\it On the  Yamabe constants of $S^2\times\re^3$ and $S^3\times\re^2$},  Differential Geometry and its Aplications,  {\bf 31} (2013), 2:308-319.

\bibitem{Pollack} D.  Pollack,  {\it Nonuniqueness and high energy solutions for a conformally invariant scalar
equation}, Communications in  Analysis and  Geometry,   {\bf 1} (1993), 3:347-414.

\bibitem{Schoen1} R.  Schoen,  {\it Conformal deformation of a {R}iemannian metric to constant
              scalar curvature},    Journal of Differential Geometry {\bf 20}  (1984), 2:479-495. 


\bibitem{Schoen} R.  Schoen,  {\it Variational theory for the total scalar curvature functional for Riemannian metrics and related topics},  Lecture Notes in Mathematics  {\bf 1365}, Springer-Verlag, Berlin (1987), 120-154. 


\bibitem{SchoenYau} R.  Schoen and S.-T. Yau,  {\it Conformally flat manifolds, {K}leinian groups and  curvature},  Inventiones Mathematicae {\bf 92} (1988),  1:47-71.

\bibitem{Trudinger} N. S.   Trudinger,  {\it Remarks concerning the conformal deformation of {R}iemannian
              structures on compact manifolds},  Annali della Scuola Normale Superiore di Pisa {\bf 22} (1968),  265-274.. 



\bibitem{Y} H.  Yamabe,  {\it On a deformation of {R}iemannian structures on compact
              manifolds}, Osaka Mathematical  Journal {\bf12} (1960), 21-37.


\end{thebibliography}
\end{document}